\theoremstyle{plain}
\newtheorem{theorem}{\bfseries Theorem}[section]
\newtheorem{lemma}[theorem]{\bfseries Lemma}
\newtheorem{corollary}[theorem]{\bfseries Corollary}
\theoremstyle{definition}
\newtheorem{definition}[theorem]{\bfseries Definition}
\newcommand{\lmu}{\underline{\mu}}
\newcommand{\umu}{\overline{\mu}}
\newcommand{\dd}{\ensuremath{\mathbf{d}}}
\newcommand{\ceil}[1]{\left\lceil #1 \right\rceil}
\newcommand{\mms}{\ensuremath{\mathrm{mms}}}
\newcommand{\red}{\ensuremath{\mathrm{red}}}
\newcommand{\blue}{\ensuremath{\mathrm{blue}}}
\newcommand{\regseq}{\mathbf{k^{n}}}
\begin{document}
\title{The Manickam-Mikl\'os-Singhi Parameter of Graphs and Degree Sequences}

\author{Zolt\'an Kir\'aly \\[-0.8ex]
\small{Department of Computer Science, E\"otv\"os Lor\'and University, Budapest}
\and Neeraja Kulkarni \\[-0.8ex]
\small{Department of Mathematics and Statistics, Carleton College}
\and Ian McMeeking \\[-0.8ex]
\small{Department of Mathematics, University of Minnesota Twin Cities}
\and Joshua Mundinger \\[-0.8ex]
\small{Department of Mathematics and Statistics, Swarthmore College}}
\maketitle
\begin{abstract}
Let $G$ be a simple graph. 
Consider all weightings of the vertices of $G$ with real numbers whose total sum is nonnegative. 
How many edges of $G$ have endpoints with a nonnegative sum?
We consider the minimum number of such edges over all such weightings as a graph parameter. Computing this parameter has been shown to be NP-hard but we give a polynomial algorithm to compute the minimum of this parameter over realizations of a given degree sequence. We also completely determine the minimum and maximum value of this parameter for regular graphs.\footnote{This research was done in 2016, during a Research course at Budapest Semesters in Mathematics.}
\end{abstract}

\section{Introduction}

Suppose there are $n$ real numbers $x_1, \ldots, x_n$ with nonnegative sum.
How many subsums $x_{i_1} + \cdots + x_{i_k}$ of size $k$ are there which are also nonnegative?
The Manickam-Miklós-Singhi (MMS) Conjecture is that if $n$ is at least $4k$, then there are at least $\binom{n-1}{k-1}$ nonnegative subsums.
The conjecture was proven for $n \geq \min\{33k^2, 2k^3\}$ by Alon, Huang, and Sudakov \cite{alon12} and for $n \geq 10^{46}k$ by Pokrovskiy \cite{pokrovskiy15}.

For a function $w: V \to \mathbb R$ and a set $X\subseteq V$, let $w(X)=\sum_{x\in X} w(x)$. 
For a hypergraph $H=(V,E)$ let $\nu(H), \; \tau(H),\; \nu^*(H)$ and $\tau^*(H)$ denote the matching number, the cover number, the fractional matching number and the fractional cover number of $H$. It is well known that $\nu(H)\le \nu^*(H)=\tau^*(H)\le \tau(H)$. For $E'\subseteq E$ let $H-E'=(V,E\setminus E')$ be the hypergraph we get from $H$ after deleting the edges in $E'$. 
Moreover, for a $k$-uniform hypergraph $H$ on $n$ vertices, let
\[
\mms(H)=\min_{w: V \to \mathbb R\;;\; w(V)\ge 0}
\bigl(|\{e\in E\;;\; w(e)\ge 0\}|\bigr), {\mathrm{ \ and}}
\]
\[
\mu(H)=\min_{E'\subseteq E}\bigl(|E'|\;;\; \nu^*(H-E')=\tau^*(H-E')<\frac{n}{k}\bigr).
\]

The definition of the hypergraph parameter $\mms(H)$ was introduced by
D.\ Mikl\'os \cite{miklos}, inspired by \cite{alon12} and \cite{huang14}.
The following theorem was proved in \cite{alon12} for complete uniform hypergraphs. We repeat their proof with a slight modification for this more general statement.

\begin{theorem}\label{thm_hypergraph}
For any $k$-uniform hypergraph $H$, $\mms(H)=\mu(H)$.
\end{theorem}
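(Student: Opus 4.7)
The plan is to prove both inequalities $\mms(H) \le \mu(H)$ and $\mu(H) \le \mms(H)$ by converting fractional LP-certificates into vertex weightings, and vice versa. Throughout I will use that $\tau^*(H-E') = \nu^*(H-E')$ by LP duality.

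For $\mms(H) \le \mu(H)$, I would take an optimal $E' \subseteq E$ realizing $\mu(H)$ and pick an optimal fractional cover $t : V \to \mathbb R_{\ge 0}$ of $H - E'$, so that $\sum_{v \in e} t(v) \ge 1$ for every $e \in E \setminus E'$ and $\sum_v t(v) = \tau^*(H-E') < n/k$. The natural attempt is to center $t$ at its mean: define $w(v) = c - t(v)$ with $c = \tfrac{1}{n}\sum_v t(v)$. Then $w(V) = 0$ and for each $e \in E\setminus E'$ we have $w(e) = kc - \sum_{v\in e} t(v) \le kc - 1 < 1 - 1 = 0$ because $c < 1/k$. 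Hence only edges in $E'$ can satisfy $w(e) \ge 0$, giving $\mms(H) \le |E'| = \mu(H)$.

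For the reverse direction $\mu(H) \le \mms(H)$, I would take an optimal weighting $w$ realizing $\mms(H)$ and let $E' = \{e \in E : w(e) \ge 0\}$, so $|E'| = \mms(H)$. It suffices to show $\tau^*(H-E') < n/k$. Assume toward contradiction that $\nu^*(H-E') \ge n/k$, and let $y$ be an optimal fractional matching of $H-E'$ extended by zero on $E'$. Set $s(v) = \sum_{e \ni v} y(e)$, so $s(v) \le 1$ for all $v$ and $\sum_v s(v) = k \sum_e y(e) \ge n$. With only $n$ vertices and each $s(v)$ at most $1$, these inequalities force $s(v) = 1$ for every $v$. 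Then on the one hand
\[
\sum_{e} y(e)\, w(e) \;=\; \sum_v w(v)\, s(v) \;=\; w(V) \;\ge\; 0,
\]
while on the other hand $w(e) < 0$ for every $e \in E \setminus E'$, with $y$ supported on $E \setminus E'$ and not identically zero, so $\sum_e y(e) w(e) < 0$, a contradiction. Hence $\tau^*(H-E') < n/k$, and $E'$ is admissible for $\mu$.

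The main subtlety I expect is the second direction: one has to resist the temptation to argue vertex by vertex and instead exploit global averaging. The crucial trick is recognizing that a fractional matching of weight $\ge n/k$ in a $k$-uniform hypergraph on $n$ vertices must \emph{saturate} every vertex, which is what makes the inner product $\sum_e y(e) w(e)$ collapse to $w(V)$ and produces the contradiction against the strict negativity on $E \setminus E'$.
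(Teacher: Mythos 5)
Your proof is correct, and one of its two halves takes a genuinely different route from the paper. The direction $\mms(H)\le\mu(H)$ is essentially identical to the paper's: your centering constant $c=\frac{1}{n}\sum_v t(v)$ equals the paper's $\frac{1}{k}-\frac{\delta}{n}$, so your weighting $w(v)=c-t(v)$ is the same construction with the same verification. The direction $\mu(H)\le\mms(H)$ is where you diverge. The paper stays on the covering side of the LP: after rescaling so that $w(v)<1/k$, it perturbs to $w'=w+\varepsilon$ so that $w'(V)>0$ while edges with $w(e)<0$ keep $w'(e)<0$, and then exhibits $f(v)=1/k-w'(v)$ as an explicit fractional cover of $H-E'$ of size $n/k-w'(V)<n/k$. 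You instead argue by contradiction on the matching side: a fractional matching of $H-E'$ of value at least $n/k$ must saturate every vertex (since $\sum_v s(v)=k\sum_e y(e)\ge n$ with each $s(v)\le 1$), whence $\sum_e y(e)w(e)=\sum_v w(v)s(v)=w(V)\ge 0$, contradicting $w(e)<0$ on the nonempty support of $y$. Both arguments are sound. The paper's version produces an explicit dual certificate, symmetric with the other direction; yours avoids the rescaling and $\varepsilon$-perturbation entirely, because the strict inequality the paper must manufacture with $\varepsilon$ (to push $f(V)$ strictly below $n/k$) is supplied in your argument by the strict negativity of $w$ on $E\setminus E'$ together with $\sum_e y(e)\ge n/k>0$.
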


\begin{proof}
First take a weighting $w$ for which $w(V)\ge 0$ and $|\{e\in E\;;\; w(e)\ge 0\}|=\mms(H)$. Let $E'= \{e\in E\;;\; w(e)\ge 0\}$, so $\mms(H)=|E'|$. After dividing each weight by $2k\cdot\max_{v\in V}(|w(v)|)$, we may assume that $w(v)<1/k$ for all $v\in V$. There is an $\varepsilon>0$ such that for $w'(v)=w(v)+\varepsilon$ we have $w'(v)\le 1/k$ for every $v\in V$, and also $w'(e)<0$ whenever $w(e)<0$. Clearly $w'(V)>w(V)\ge 0$.
Let $f(v)=1/k-w'(v)$ for each $v\in V$. Now $f$ is a fractional cover of $H-E'$ with size $f(V)=n/k-w'(V)<n/k$, proving $\mu(H)\le \mms(H)$.

Let $E'\subseteq E$ be a subset with $\mu(H)=|E'|$ and $\tau^*(H-E')<\frac{n}{k}$. Let $f$ denote a fractional cover of $H-E'$ with $f(V)=n/k-\delta<n/k$. That is, for each edge $e\in E\setminus E'$ we have $f(e)\ge 1$. For a vertex $v\in V$, define $w(v)=\frac{1}{k}-\frac{\delta}{n}-f(v)$. On one hand $w(V)=n/k-\delta-f(V)=0$, on the other hand,  for any $e\in E$ if $w(e)=1-k\delta/n-f(e)\ge 0$,
then $f(e)\le 1-k\delta/n<1$, so $e\in E'$ as $f$ is a fractional cover for $H-E'$. 
\end{proof}

Let $\delta(H)$ denote the minimum degree in $H$.
It is obvious that $\mms(H)=\mu(H)\le\delta(H)$.
Huang and Sudakov in \cite{huang14} defined that a $k$-uniform hypergraph $H$ \emph{has the MMS property} if $\mms(H)=\delta(H)$. Using this concept the MMS conjecture says that if $n\ge 4k$, then the complete $k$-uniform hypergraph on $n$ vertices has the MMS property.

In this paper, only simple graphs ($2$-uniform hypergraphs) are considered.
Let $G=(V,E)$ be a graph. A subgraph $F$ is called a \emph{perfect $2$-matching} if it is spanning and its every component is either a $K_2$ or an odd cycle. ({\sl Remark: in the literature its weighted version is usually called a perfect $2$-matching when we give weight one to the edges of cycles and weight two for the other edges.})
It is well known that $G$ has a perfect $2$-matching if and only if $\nu^*(G)=n/2$. For $S\subseteq V$, let $\Gamma_G(S)$
denote the set of vertices in $V\setminus S$ having at least one neighbor in $S$. In 1953, Tutte characterized the graphs having a perfect $2$-matching.  

\begin{theorem}[Tutte \cite{Tutte53}]\label{thm_tutte}
A graph $G$ has a perfect $2$-matching if and only if every independent set $S$ of vertices satisfy $|\Gamma_G(S)| \geq |S|$. 
\end{theorem}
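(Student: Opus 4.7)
The plan is to invoke the characterization cited just above the theorem---$G$ has a perfect $2$-matching if and only if $\nu^*(G) = n/2$---together with LP duality $\nu^*(G) = \tau^*(G)$. Since the constant assignment $f \equiv 1/2$ is always a feasible fractional cover of a graph, $\tau^*(G) \le n/2$ holds automatically, so the task reduces to showing that the Hall-type condition on independent sets is equivalent to $\tau^*(G) = n/2$.

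For necessity, I would fix a perfect $2$-matching $F$ and an independent set $S$, and collect the $F$-neighbors of $S$ component by component. Each $K_2$ component of $F$ meeting $S$ must contain exactly one vertex of $S$ (by independence), contributing its partner to $\Gamma_G(S)$; different $K_2$ components contribute distinct vertices. For each odd cycle component $C$ with $S_C := S \cap V(C)$, the cyclic successor map on $V(C)$ is a bijection that sends $S_C$ injectively into $V(C) \setminus S$ by independence, yielding $|S_C|$ fresh neighbors in $\Gamma_G(S)$. Summing over vertex-disjoint components gives $|\Gamma_G(S)| \ge |S|$.

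For sufficiency, I would argue contrapositively: assume $\tau^*(G) < n/2$ and use the classical half-integrality of the fractional vertex cover polytope of a graph (equivalent to Balinski's theorem on the fractional matching polytope) to obtain an optimal half-integral cover $f : V \to \{0, \tfrac{1}{2}, 1\}$ with $f(V) < n/2$. Writing $V_i = f^{-1}(i)$, the edge constraint $f(u) + f(v) \ge 1$ forces $V_0$ to be independent and $\Gamma_G(V_0) \subseteq V_1$. A short calculation turns
\[
  f(V) \;=\; |V_1| + \tfrac{1}{2}|V_{1/2}| \;<\; \tfrac{n}{2} \;=\; \tfrac{1}{2}\bigl(|V_0| + |V_{1/2}| + |V_1|\bigr)
\]
into $|V_1| < |V_0|$, which contradicts the hypothesis applied to $S = V_0$. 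The one nontrivial external ingredient in this plan is the half-integrality of the fractional cover LP on a graph; everything else is bookkeeping once the partition $V_0 \cup V_{1/2} \cup V_1$ is in hand.
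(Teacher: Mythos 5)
Your proposal is correct. Note, though, that the paper offers no proof of this statement at all: it is quoted as Tutte's 1953 theorem and used as a black box, so there is no ``paper's proof'' to match. What you have written is a legitimate self-contained derivation resting on three inputs, two of which the paper explicitly grants as known (the equivalence of having a perfect $2$-matching with $\nu^*(G)=n/2$, and LP duality $\nu^*=\tau^*$) and one of which is genuinely external (half-integrality of the optimal fractional vertex cover). Your necessity argument is clean and purely combinatorial: the component-by-component count is right, since a $K_2$ component meets an independent $S$ in at most one vertex and donates its partner, while the cyclic successor map on an odd cycle injects $S_C$ into $V(C)\setminus S$, and vertex-disjointness of the components makes all contributions distinct. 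Your sufficiency argument via the partition $V_0\cup V_{1/2}\cup V_1$ of a half-integral optimal cover is also correct: the edge constraints force $V_0$ independent with $\Gamma_G(V_0)\subseteq V_1$, and the inequality $|V_1|+\tfrac12|V_{1/2}|<\tfrac12(|V_0|+|V_{1/2}|+|V_1|)$ does give $|V_1|<|V_0|$. One caveat worth being honest about: the ``well known'' equivalence with $\nu^*(G)=n/2$ that you start from already hides the same half-integrality phenomenon (for the fractional matching polytope), so your proof is not more elementary than Tutte's --- the classical alternative is a direct combinatorial reduction to bipartite matching in the doubled graph plus K\H{o}nig/Hall --- but as a derivation from the facts the paper itself puts on the table, it is complete.
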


Putting Theorems \ref{thm_hypergraph} and \ref{thm_tutte} together gives the following corollary, first observed by a previous research group at Budapest Semesters in Mathematics \cite{miklos}:

\begin{theorem}\label{thm: mu-p2m}
	Let $G=(V,E)$ be a graph. Then the following are equivalent:
    \begin{enumerate}
    	\item $\mms(G)=\mu(G) \leq k$;
        \item There exists a set $S\subseteq V$ and a set $E'$ of $k$ edges such that in the graph $G-E'$, $S$ is independent with fewer than $|S|$ neighbors;
        \item There exists a set $E'$ of $k$ edges such that $G-E'$ has no perfect $2$-matching, i.e., $E'$ blocks (covers) every perfect $2$-matching.
    \end{enumerate}
\end{theorem}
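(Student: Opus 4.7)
The plan is to prove $(1) \Leftrightarrow (3)$ using Theorem~\ref{thm_hypergraph} together with the known equivalence between having a perfect $2$-matching and the fractional matching number attaining $n/2$, and then to prove $(2) \Leftrightarrow (3)$ as an immediate application of Tutte's theorem (Theorem~\ref{thm_tutte}) to the graph $G-E'$.

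For $(1) \Leftrightarrow (3)$, recall that for a graph (i.e.\ $k=2$) we have $\nu^*(G) = \tau^*(G) = n/2$ precisely when $G$ admits a perfect $2$-matching. Combined with Theorem~\ref{thm_hypergraph}, the condition $\mms(G)=\mu(G)\le k$ becomes: there exists $E'\subseteq E$ with $|E'|\le k$ such that $G-E'$ has no perfect $2$-matching. To pass between "at most $k$" and "exactly $k$" as stated in (3), observe that enlarging $E'$ by additional edges only makes $G-E'$ sparser and so preserves the property of having no perfect $2$-matching; conversely, any set of exactly $k$ edges whose removal destroys all perfect $2$-matchings witnesses $\mu(G)\le k$.

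For $(2) \Leftrightarrow (3)$, apply Tutte's theorem to the graph $G-E'$: it has no perfect $2$-matching if and only if there is some independent set $S$ in $G-E'$ with $|\Gamma_{G-E'}(S)| < |S|$. This is precisely the statement in (2).

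The only mildly delicate point is the padding argument in $(1) \Leftrightarrow (3)$, and ensuring that the set $S$ produced by Tutte's theorem is independent in $G-E'$ (not necessarily in $G$); both are straightforward. No serious obstacle is anticipated, since this is essentially a direct combination of Theorems~\ref{thm_hypergraph} and~\ref{thm_tutte}.
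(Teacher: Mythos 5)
Your proposal is correct and matches the paper's intended argument: the paper gives no written proof, simply stating that the theorem follows by "putting Theorems \ref{thm_hypergraph} and \ref{thm_tutte} together," which is exactly the combination you carry out (Theorem \ref{thm_hypergraph} plus the fact that $\nu^*(G)=n/2$ iff $G$ has a perfect $2$-matching for $(1)\Leftrightarrow(3)$, and Tutte's theorem applied to $G-E'$ for $(2)\Leftrightarrow(3)$). The padding step you flag is the same routine observation the paper leaves implicit.
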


\begin{corollary}
Let $G=(V,E)$ be a graph. Then $\mu(G)=0$ if and only if there exists an independent set $S\subseteq V$ such that  $|\Gamma_G(S)| < |S|$.
\end{corollary}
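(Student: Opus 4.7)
The plan is to deduce the corollary directly from Theorem~\ref{thm: mu-p2m} by specializing to the case $k=0$, noting that $\mu(G)$ is a nonnegative integer so $\mu(G)=0$ is equivalent to $\mu(G)\le 0$.

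Concretely, I would argue as follows. For the forward direction, suppose $\mu(G)=0$, so $\mu(G)\le 0$. Applying the implication (1) $\Rightarrow$ (2) of Theorem~\ref{thm: mu-p2m} with $k=0$ yields a set $S\subseteq V$ and a set $E'$ of $0$ edges (forcing $E'=\emptyset$) such that in $G-E'=G$, $S$ is independent with $|\Gamma_G(S)|<|S|$, which is exactly the desired condition. For the converse, given such an independent $S\subseteq V$ with $|\Gamma_G(S)|<|S|$, take $E'=\emptyset$; then condition (2) of Theorem~\ref{thm: mu-p2m} holds with $k=0$, so by the implication (2) $\Rightarrow$ (1) we get $\mu(G)\le 0$, hence $\mu(G)=0$.

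There is essentially no obstacle here: the corollary is the $k=0$ instance of the equivalence already established in Theorem~\ref{thm: mu-p2m}. The only conceptual point worth highlighting in the write-up is that when $k=0$ the edge set $E'$ is forced to be empty, so conditions (2) and (3) collapse to ``$G$ itself fails the Tutte condition'', which is precisely what the corollary asserts.
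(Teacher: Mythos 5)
Your proposal is correct and matches the paper's intent: the corollary is stated there without proof precisely because it is the $k=0$ specialization of Theorem~\ref{thm: mu-p2m}, with $E'$ forced to be empty. Your observation that $\mu(G)\ge 0$ makes ``$\mu(G)=0$'' equivalent to ``$\mu(G)\le 0$'' is the only detail needed, and you handle it correctly.
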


\begin{corollary}
A graph $G$ has the MMS property $($i.e., $\mu(G)=\delta(G))$ if no fewer edges than $\delta(G)$ can block every perfect $2$-matching.
\end{corollary}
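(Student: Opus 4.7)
The plan is to derive this directly from Theorem \ref{thm: mu-p2m}, since the statement is essentially a reformulation of the equivalence between parts (1) and (3) of that theorem.

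First, I would observe that the equivalence $(1)\Leftrightarrow (3)$ in Theorem \ref{thm: mu-p2m} characterizes $\mu(G)$ as the minimum size of an edge set that blocks every perfect $2$-matching: for each $k$, we have $\mu(G)\le k$ iff some $k$-edge set blocks every perfect $2$-matching, so $\mu(G)$ equals exactly the minimum blocking number. Thus the hypothesis --- that no set of fewer than $\delta(G)$ edges blocks every perfect $2$-matching --- translates directly into the inequality $\mu(G)\ge \delta(G)$.

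Next, I would combine this with the general upper bound $\mu(G)=\mms(G)\le \delta(G)$ noted right after Theorem \ref{thm_hypergraph}. (This upper bound follows immediately from the definition of $\mms$: assign a very negative weight to a vertex of minimum degree and small positive weights to the remaining vertices so that $w(V)\ge 0$, ensuring that only edges not incident to that vertex have nonnegative sum.) Together with the previous step, this yields $\mu(G)=\delta(G)$, which is the MMS property.

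There is no serious obstacle: the whole argument is a two-line unpacking of definitions once Theorem \ref{thm: mu-p2m} is in hand. In fact the ``if'' statement proved here is genuinely an ``iff'', since if some edge set of size strictly less than $\delta(G)$ blocks every perfect $2$-matching, then $\mu(G)<\delta(G)$ and $G$ fails to have the MMS property.
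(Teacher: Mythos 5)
Your proposal is correct and is exactly the intended argument: the paper treats this corollary as immediate from the equivalence $(1)\Leftrightarrow(3)$ of Theorem \ref{thm: mu-p2m} (which identifies $\mu(G)$ with the minimum number of edges blocking every perfect $2$-matching) together with the standing bound $\mu(G)\le\delta(G)$. One small correction: your parenthetical justification of $\mu(G)=\mms(G)\le\delta(G)$ is backwards --- giving a minimum-degree vertex a very negative weight makes the edges \emph{incident} to it negative, leaving $|E|-\delta(G)$ nonnegative edges rather than $\delta(G)$. The correct weighting gives the minimum-degree vertex a large positive weight and all other vertices slightly negative weights (or, in the $\mu$ picture, simply note as in Lemma \ref{lem: mu-less-delta} that deleting the $\delta(G)$ edges at a minimum-degree vertex destroys every perfect $2$-matching). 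This does not affect the validity of your main argument, since that bound is established in the paper independently.
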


For a graph $G=(V,E)$ we define some notation.
The degree of a vertex $v$ is denoted by $d_G(v)$. Let $S,T\subseteq V$ be two disjoint subsets of the vertices.
Let $i_G(S)$  denote the number of edges having both end-vertices in $S$, and let $d_G(S,T)$ denote the number of edges having one end-vertex in $S$ and the other end-vertex in $T$. Moreover, we use the following unusual notation. Let $E_G(S; V\setminus T)$ denote the set of edges having either both end-vertices in $S$ or one end-vertex in $S$ and the other end-vertex in $(V\setminus T)\setminus S$. For simplicity, this latter set will be denoted by $V-S-T$. Thus $|E_G(S; V\setminus T)|=i_G(S)+d_G(S,V-S-T)$.

\begin{corollary} \label{cor: mu-deleted-edges}
	Let $G$ be a graph. Then
    \begin{equation*}
    	\mu(G) = \min |E_G(S; V\setminus T)|,
    \end{equation*}
    where $S$ and $T$ range over all disjoint subsets of $V$ such that $|S| > |T|$.
\end{corollary}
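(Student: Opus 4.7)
The plan is to derive both inequalities $\mu(G) \leq \min |E_G(S;V\setminus T)|$ and $\mu(G) \geq \min |E_G(S;V\setminus T)|$ directly from Theorem~\ref{thm: mu-p2m}, using the equivalence of its items (1) and (2). The key observation is that the set $E_G(S;V\setminus T)$ is precisely the set of edges one must delete so that, in the remaining graph, $S$ becomes independent and its neighborhood lies inside $T$.

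For the upper bound, I would fix disjoint $S,T\subseteq V$ with $|S|>|T|$ and set $E'=E_G(S;V\setminus T)$. By the definition of this set, every edge of $G$ that either lies inside $S$ or joins $S$ to $V-S-T$ is removed. Hence in $G-E'$ the set $S$ is independent and $\Gamma_{G-E'}(S)\subseteq T$, so $|\Gamma_{G-E'}(S)|\le |T|<|S|$. Applying Theorem~\ref{thm: mu-p2m} with $k=|E'|$ yields $\mu(G)\le |E_G(S;V\setminus T)|$; minimizing over admissible $(S,T)$ gives the inequality.

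For the lower bound, I would take an optimal pair $(S,E')$ as in item (2) of Theorem~\ref{thm: mu-p2m}, so $\mu(G)=|E'|$, the set $S$ is independent in $G-E'$, and $|\Gamma_{G-E'}(S)|<|S|$. Setting $T=\Gamma_{G-E'}(S)$ gives disjoint sets with $|S|>|T|$. Now any edge in $E_G(S;V\setminus T)$ must belong to $E'$: either it lies inside $S$, and would violate the independence of $S$ in $G-E'$, or it joins some $v\in S$ to a vertex $u\in V-S-T$, which would put $u$ into $\Gamma_{G-E'}(S)\setminus T$, contradicting the definition of $T$. Therefore $E_G(S;V\setminus T)\subseteq E'$, so $|E_G(S;V\setminus T)|\le |E'|=\mu(G)$.

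There is no real obstacle here beyond bookkeeping. The only mild subtlety is to notice that in the lower-bound direction, when we reconstruct $T$ from the optimal $(S,E')$, we should choose $T$ equal to (not merely containing) the neighborhood $\Gamma_{G-E'}(S)$, because a strictly larger $T$ could allow $|S|>|T|$ to fail; with this choice the containment $E_G(S;V\setminus T)\subseteq E'$ follows immediately from the definitions.
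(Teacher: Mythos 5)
Your proof is correct and follows essentially the same route as the paper: both directions hinge on the equivalence of items (1) and (2) of Theorem~\ref{thm: mu-p2m}, with $T$ taken to be the neighborhood of $S$ in $G-E'$ for the lower bound and $E'=E_G(S;V\setminus T)$ for the upper bound. Your write-up just spells out the containment $E_G(S;V\setminus T)\subseteq E'$ in more detail than the paper does.
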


\begin{proof}
Suppose that $E'$ is a set of $\mu(G)$ edges so that some $S$ is independent in $G-E'$ with fewer than $|S|$ neighbors.
    Let $T$ be the neighborhood of $S$ in $G-E'$;
    then $E_G(S; V\setminus T)\subseteq E'$.
    Conversely, for disjoint $S$ and $T$ with $|S| > |T|$, the subgraph
    $G-E_G(S; V\setminus T)$ has $S$ independent with $|T| < |S|$ neighbors.
\end{proof}

If $S$ and $T$ are disjoint subsets of $V$ with $|S|>|T|$ and $\mu(G) = |E_G(S; V\setminus T)|$, then we say that the pair $(S,T)$ \emph{realizes} $\mu(G)$.

\bigskip

As bipartite graphs have no odd cycles, we also get:

\begin{corollary}
Let $G$ be a bipartite graph. Then $\mu(G)$ is the minimum number of edges that can block every perfect matching.
\end{corollary}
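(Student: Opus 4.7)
The plan is to reduce this corollary immediately to Theorem \ref{thm: mu-p2m} by observing that, in a bipartite graph, a perfect $2$-matching and a perfect matching are the same object. Recall that a perfect $2$-matching is a spanning subgraph each of whose components is either a $K_2$ or an odd cycle. Since a bipartite graph contains no odd cycles, every component of any perfect $2$-matching of a bipartite $G$ must be a $K_2$; equivalently, such a subgraph is a perfect matching of $G$. Conversely, any perfect matching is trivially a perfect $2$-matching.

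Consequently, for bipartite $G$, the family of perfect $2$-matchings coincides with the family of perfect matchings, and an edge set $E'\subseteq E$ blocks every perfect $2$-matching if and only if it blocks every perfect matching. Applying condition (3) of Theorem \ref{thm: mu-p2m}, we get that $\mu(G)\le k$ holds if and only if there exist $k$ edges meeting every perfect matching of $G$. Minimizing over $k$ yields the claimed equality.

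There is essentially no obstacle: the content of the corollary is the identification of perfect $2$-matchings with perfect matchings in the bipartite setting, after which Theorem \ref{thm: mu-p2m} does all the work. The only thing worth being careful about is the degenerate case in which $G$ has no perfect matching at all; then the empty set vacuously blocks every perfect matching, and simultaneously the second equivalent condition of Theorem \ref{thm: mu-p2m} is witnessed by $E'=\emptyset$ together with an independent set $S$ violating Hall's condition (guaranteed by K\"onig--Hall), so both sides equal $0$ and consistency is preserved.
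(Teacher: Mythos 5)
Your proof is correct and is exactly the paper's argument: the paper derives this corollary in one line from the observation that bipartite graphs have no odd cycles, so perfect $2$-matchings coincide with perfect matchings and condition (3) of Theorem \ref{thm: mu-p2m} applies directly. Your extra remark about the degenerate case is consistent with Corollary 1.5 but not needed.
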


 So $\mu(G)$ is a nice graph parameter. One can ask whether it is computable or not.
 
\begin{theorem}[Dourado et al.\ \cite{dourado15}]
	For a bipartite graph $G$, checking whether $\mu(G) <\delta(G)$ is NP-complete.
\end{theorem}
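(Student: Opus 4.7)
The plan splits into membership in NP and NP-hardness.  For membership, a short certificate is a set $E'\subseteq E$ with $|E'|<\delta(G)$ that destroys every perfect matching of $G$; by the bipartite specialization of Corollary~\ref{cor: mu-deleted-edges}, the existence of such an $E'$ is equivalent to $\mu(G)<\delta(G)$, and whether $G-E'$ has a perfect matching can be checked in polynomial time by any standard bipartite matching algorithm.

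For NP-hardness, I would design a polynomial reduction from a canonical NP-complete problem such as 3-SAT (or a matching-flavored variant such as 3-dimensional matching, which feels especially natural given the target).  Given a formula $\phi$, I would construct a bipartite graph $G_\phi$ of uniform degree $d=\delta(G_\phi)$ by assembling a \emph{variable gadget} for each $x_i$ admitting exactly two cheap Hall-type violators (one per truth value), a \emph{clause gadget} for each $C_j$ that is cheaply breakable only when at least one of its literals is set true, and global interconnections/padding keeping the graph $d$-regular.  The intended correspondence to verify in both directions is that $\phi$ is satisfiable if and only if some disjoint $S,T\subseteq V(G_\phi)$ with $|S|>|T|$ satisfy $|E_{G_\phi}(S;V\setminus T)|\le d-1$, i.e., $\mu(G_\phi)<\delta(G_\phi)$.

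The main obstacle is reconciling two competing demands on the construction.  The graph must be (nearly) $d$-regular so that $\delta(G_\phi)$ is pinned to the target threshold, while the gadgets must be rigid enough that the only Hall violators of size at most $d-1$ are those encoding satisfying assignments; any ``spurious'' violator would break the correspondence.  Rigidity is typically enforced by attaching a bipartite substructure of large $\mu$ (for instance, a highly connected $d$-regular bipartite graph or long even cycles) to each gadget, forcing any cheap violator to factor through the intended variable/clause choices.  If direct gadget design proves unwieldy, an alternative two-step route is to first reduce 3-SAT to the bipartite decision problem ``$\mu(G)\le k$'' and then pad with a disjoint $(k{+}1)$-regular bipartite booster whose own $\mu$ is at least $k+1$; since under disjoint union both $\mu$ and $\delta$ behave as the minima of the parts, this converts a plain blocker question into the specific threshold question of the theorem.
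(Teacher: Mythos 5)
This theorem is not proved in the paper at all: it is quoted from Dourado et al.\ \cite{dourado15} as an external result, so there is no internal proof to compare against. Judged on its own terms, your proposal has a genuine gap. The NP-membership half is fine and essentially forced: a certificate is a set $E'$ with $|E'|\le\delta(G)-1$ such that $G-E'$ has no perfect matching, which (for bipartite $G$, where perfect $2$-matchings and perfect matchings coincide by the corollary following Theorem~\ref{thm: mu-p2m}) witnesses $\mu(G)<\delta(G)$ and is verifiable in polynomial time by a bipartite matching algorithm.

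The hardness half, however, is the entire content of the theorem, and you have not proved it --- you have only announced an intention to prove it. No variable gadget is specified, no clause gadget is specified, no degree $d$ is fixed, and neither direction of the equivalence ``$\phi$ satisfiable iff $\mu(G_\phi)<d$'' is argued. Your own text identifies the crux (ruling out spurious Hall-type violators of cost at most $d-1$ while keeping the graph $d$-regular) and then defers it to ``rigidity is typically enforced by\dots'', which is not an argument. The fallback route has the same defect: reducing 3-SAT to the decision problem ``$\mu(G)\le k$'' on bipartite graphs is itself the hard step, and asserting it does not discharge it. (The padding observation that $\mu$ and $\delta$ both behave as minima under disjoint union is correct and would be a legitimate final step, but only once the first reduction exists.) As it stands, the proposal establishes membership in NP and nothing more; to be a proof it would need an explicit construction with a two-directional correctness argument, as in the cited reference.
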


A recent trend in graph theory is the following: given a graph parameter---perhaps one which is NP-hard to compute---what values does that parameter take over all graphs with the same degree sequence?
Dvo\v{r}\'ak and Mohar in \cite{dvorak13} and later  Bessy and Rautenbach in \cite{bessy17} investigated the possible values of the chromatic number and clique number over a given degree sequence, obtaining nice bounds relating them.
Hence, we will investigate the possible values of $\mu(G)$ for all $G$ realizing a given degree sequence.

Given a degree sequence $\dd$, let $\lmu(\dd)$ denote the minimum value of $\mu$ over all graphs with degree sequence $\dd$.
One of our main results is the following:

\begin{theorem}\label{thm_lower-mu-poly}
	Given a degree sequence $\dd$, there is a polynomial algorithm to determine $\lmu(\dd)$. 
\end{theorem}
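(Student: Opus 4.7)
By Corollary \ref{cor: mu-deleted-edges}, $\lmu(\dd)$ equals the minimum of $|E_G(S;V\setminus T)|=\sum_{v\in S}d_G(v)-(i_G(S)+d_G(S,T))$ as $G$ ranges over realizations of $\dd$ and $(S,T)$ over disjoint vertex subsets with $|S|>|T|$. Once we fix the multiset of degrees assigned to $S$, the sum $\sum_{v\in S}d_G(v)$ is determined, so our task reduces to maximizing $i_G(S)+d_G(S,T)$ over all compatible realizations. My plan is to iterate over the $O(n^2)$ pairs $(s,t)=(|S|,|T|)$ with $s>t$; for each pair, argue that an optimal assignment of degrees to $S$, $T$ and $R=V\setminus(S\cup T)$ has a canonical form; and then solve the remaining realization problem as a maximum-weight $f$-factor.

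For fixed $(s,t)$ and a fixed vertex-to-degree assignment, maximizing $i_G(S)+d_G(S,T)$ over simple graphs $G$ realizing the degrees is exactly a maximum-weight $f$-factor problem on the complete graph $K_n$: set $f(v)=d_v$ and weight an edge by $1$ if both endpoints lie in $S$ or one lies in $S$ and the other in $T$, and by $0$ otherwise. This is polynomial-time solvable by the classical weighted $f$-factor (equivalently, $b$-matching) algorithm.

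The main structural claim is that one may take the assignment \emph{canonical}: $T$ receives the $t$ largest degrees of $\dd$ and $S$ the $s$ smallest, with $R$ receiving the middle degrees. Intuitively, high-degree vertices in $T$ maximize the capacity to absorb edges coming from $S$, while low-degree vertices in $S$ minimize the $S$-side edge budget that must be routed. Granted this, the algorithm enumerates the $O(n^2)$ pairs $(s,t)$, runs one $f$-factor computation for each, and outputs the minimum over all pairs of $\sum_{v\in S}d_v$ minus the $f$-factor weight.

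Proving the canonical form is the principal obstacle. The natural approach is an exchange argument: if an optimum has $w\in T$ and $w'\in R$ with $d_w<d_{w'}$, relabel $w\leftrightarrow w'$ and, if necessary, perform two-edge swaps of the form $\{ws^*,w'u\}\mapsto\{w's^*,wu\}$ in the optimal $G$, with $s^*\in N_G(w)\cap S\setminus N_G(w')$ and $u\in N_G(w')\setminus(S\cup N_G(w)\cup\{w\})$. A pigeonhole count, using $d_{w'}>d_w$ together with the working assumption $d_G(w,S)>d_G(w',S)$, guarantees such swap partners exist until $d_G(w',S)\ge d_G(w,S)$, after which the relabelled partition has at least as many good edges as the original; each swap preserves both the degree sequence and the simple-graph property. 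A symmetric argument forces the $s$ smallest degrees into $S$. The delicate part is verifying the existence of swap partners throughout and maintaining simplicity.
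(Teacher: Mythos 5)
Your proposal is correct and follows essentially the same route as the paper: the canonical-form claim you identify as the principal obstacle is precisely the paper's Lemma \ref{lem: S-T-degrees}, proved by exactly the exchange-plus-two-edge-swap argument you sketch (the paper's Lemma \ref{lem: colored-vertex-swap} is your pigeonhole step with ``red'' $=S$, and termination is handled by minimizing the potential $\sum_{s\in S}d(s)-\sum_{t\in T}d(t)$), after which each fixed-size subproblem is a weighted $b$-factor computation just as you describe. The only cosmetic differences are that the paper additionally invokes Lemma \ref{lem: choose-S-smaller} to restrict to $|T|=|S|-1$, reducing your $O(n^2)$ subproblems to $O(n)$, and phrases the factor problem as minimizing the cost of the ``bad'' edges rather than maximizing the weight of the ``good'' ones.
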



We also compute the maximum and minimum values of $\mu$ over all regular degree sequences.

\section{Degree sequences}

Given a graph, its \emph{degree sequence} is the list of degrees of the vertices of the graph.

\begin{definition}
	A \emph{graphical degree sequence} is a sequence of nonnegative integers $(d_1, d_2, \ldots, d_n)$ with even sum such that
    there is a simple graph $G$ on vertex set $V=\{v_1, v_2, \ldots, v_n\}$ in which the degree of of vertex $v_i$ is $d_i$ for $1 \le i \le n$.
\end{definition}

\begin{definition}
For a graphical degree sequence $\dd$, we define
\begin{align*}
	\lmu(\dd) &= \min\{ \mu(G) : G\text{ realizes } \dd\}, \\
    \umu(\dd) &= \max\{ \mu(G) : G\text{ realizes } \dd\}.
\end{align*}
We refer to these as \emph{lower $\mu$} and \emph{upper $\mu$} of a degree sequence, respectively.
\end{definition}

A fundamental tool of realizations of a degree sequence is the \emph{swap}: given an alternating 4-cycle of edges and non-edges in a graph, swapping the edges and non-edges gives a new realization of the same degree sequence. 
Given two graphs $G$ and $G'$ which realize the same degree sequence, there always exists a sequence of swaps which may be applied to $G$ to obtain $G'$ (see, for example, \cite[pp.\ 153-154]{berge73}).
We thus investigate the effect of a swap on $\mu$, making use of Corollary \ref{cor: mu-deleted-edges}.

\begin{lemma}\label{lem: mu-swap}
	Suppose $G$ and $G'$ have the same degree sequence
    and can be obtained from each other via a single swap.
    Then 
    \begin{equation*}
    	\left|\mu(G) - \mu(G')\right| \leq 1.
    \end{equation*}
\end{lemma}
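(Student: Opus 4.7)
The plan is to apply Corollary \ref{cor: mu-deleted-edges}, which expresses $\mu(G)$ as the minimum of $|E_G(S; V\setminus T)|$ over disjoint $S, T \subseteq V$ with $|S| > |T|$. The key feature of this formula is that, once $(S,T)$ is fixed, whether a given edge $xy$ lies in $E_G(S; V\setminus T)$ depends only on the labels of $x$ and $y$ in the three-part partition $V = S \sqcup T \sqcup O$, where $O := V\setminus(S\cup T)$: the edge is counted if and only if at least one endpoint lies in $S$ and neither endpoint lies in $T$. This locality of the counting criterion is the reason to work with Corollary \ref{cor: mu-deleted-edges} rather than with the global perfect-$2$-matching characterization of Theorem \ref{thm: mu-p2m}.

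Label the four swap-vertices $a,b,c,d$ so that $G$ contains the edges $ab$ and $cd$ while $G'$ contains $ac$ and $bd$, all other edges of $G$ and $G'$ agreeing. First I would choose $(S,T)$ realizing $\mu(G)$ and estimate
\[ \mu(G') \leq |E_{G'}(S; V\setminus T)| = |E_G(S; V\setminus T)| + f(ac) + f(bd) - f(ab) - f(cd), \]
where $f(xy)\in\{0,1\}$ indicates membership of $xy$ in $E(S;V\setminus T)$ according to the criterion above. Because the swap changes only edges among $\{a,b,c,d\}$, all other edges contribute identically to both counts, so no further bookkeeping is needed.

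The heart of the argument is to show that this net change is at most $+1$; it suffices to rule out the case when it equals $+2$. Suppose $f(ac)=f(bd)=1$; then none of $a,b,c,d$ lies in $T$. If additionally $f(ab)=f(cd)=0$, the ``neither endpoint in $T$'' clause for $ab$ and $cd$ is already satisfied, so the ``at least one endpoint in $S$'' clause must fail for both; hence $a,b,c,d\in O$. But then neither $ac$ nor $bd$ has an endpoint in $S$, contradicting $f(ac)=f(bd)=1$. Thus $\mu(G')\leq \mu(G)+1$, and the reverse inequality follows by applying the same reasoning to the swap that transforms $G'$ back into $G$. The main obstacle is really just this short case analysis, which collapses quickly once one notices that the two removed edges and the two added edges share all four endpoints.
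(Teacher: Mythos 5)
Your proposal is correct and follows essentially the same route as the paper: both fix an $(S,T)$ realizing $\mu(G)$, compare $|E_G(S;V\setminus T)|$ with $|E_{G'}(S;V\setminus T)|$ via Corollary \ref{cor: mu-deleted-edges}, and rule out a net change of $+2$ by a short case analysis on how the four swap vertices sit in $S$, $T$, and $V-S-T$. Your version of that case analysis (forcing all four vertices into $V-S-T$ and deriving a contradiction) is just a reorganization of the paper's observation that one of the deleted edges must then itself lie in $E_G(S;V\setminus T)$.
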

\begin{proof}
	Suppose that $V(G) = V(G') = V$
    and that $G$ is changed to $G'$ by a single swap. 
	Let $S$ and $T$ be disjoint subsets of $V$. 
	We compare $E_G(S; V\setminus T)$ and $E_{G'}(S; V\setminus T)$.
	Since $G'$ has exactly two edges that are not also edges of $G$, we observe
	\begin{equation*}
		|E_{G'}(S; V\setminus T)| \leq |E_G(S; V\setminus T)| + 2
	\end{equation*}
	If equality holds, 
	then both of the swapped edges in $G'$ are in $E_{G'}(S; V\setminus T)$,
	while neither of the swapped edges in $G$ are in $E_G(S; V\setminus T)$.
	Say the swapped edges in $G'$ are $su, s'u'$ where $s,s'\in S$ and $u,u'\notin T$. 
	Then either $su'$ or $ss'$ is one of the swapped edges in $G$,
	and is in $E_G(S; V\setminus T)$.
	Hence equality cannot hold.

	If $(S,T)$ realizes $\mu(G)$, we conclude that 
	\begin{equation*}
		\mu(G') \le |E_{G'}(S; V\setminus T)| \le |E_G(S; V\setminus T)| + 1 = \mu(G) + 1.
	\end{equation*}
	By symmetry, $\mu(G) \leq \mu(G') + 1$ as well.
\end{proof}

As any two realizations of a given degree sequence are related by a sequence of swaps, the possible values of $\mu$ over a degree sequence form an interval.
Thus, complete information is given by the minimum and maximum possible values of $\mu$ over a degree sequence.

\begin{theorem}\label{thm: mu-continuity}
	Suppose $\dd$ is a graphical degree sequence, $k$ is an integer
    and $\lmu(\dd) \leq k \leq \umu(\dd)$.
    Then there is a graph $G$ realizing $\dd$
    such that $\mu(G) = k$.
\end{theorem}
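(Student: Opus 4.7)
The plan is to use Lemma \ref{lem: mu-swap} together with the fact, cited in the paragraph preceding that lemma, that any two realizations of the same degree sequence are connected by a sequence of swaps. In other words, I would prove a discrete intermediate value theorem for $\mu$ along a swap sequence.

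First I would pick realizations $G_0$ and $G_1$ of $\dd$ with $\mu(G_0) = \lmu(\dd)$ and $\mu(G_1) = \umu(\dd)$, which exist by the definitions of $\lmu$ and $\umu$. Then I would invoke the swap-connectivity result (Berge, as cited in the excerpt) to obtain a sequence of graphs
\[
G_0 = H_0, H_1, H_2, \ldots, H_m = G_1,
\]
all realizing $\dd$, such that each $H_{i+1}$ is obtained from $H_i$ by a single swap.

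Next, by Lemma \ref{lem: mu-swap}, the integer sequence $\mu(H_0), \mu(H_1), \ldots, \mu(H_m)$ has consecutive differences of absolute value at most $1$. It starts at $\lmu(\dd)$ and ends at $\umu(\dd)$. Any integer-valued sequence with unit-bounded steps hits every integer between its first and last value, so for any $k$ with $\lmu(\dd) \le k \le \umu(\dd)$ there is some $i$ with $\mu(H_i) = k$. Taking $G = H_i$ completes the proof.

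There is essentially no obstacle here once Lemma \ref{lem: mu-swap} and swap-connectivity are in hand; the only thing to double-check is that the swap-connectivity statement does not require the intermediate graphs to be simple with the same vertex labeling (it does, in the reference cited), so that Lemma \ref{lem: mu-swap} can be applied at each step.
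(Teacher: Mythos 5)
Your proposal is correct and follows exactly the paper's argument: pick extremal realizations, connect them by a swap sequence, and apply Lemma \ref{lem: mu-swap} together with the discrete intermediate value principle. No differences worth noting.
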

\begin{proof}
	Let $G$ and $G'$ be realizations of $\dd$ with $\mu(G) = \lmu(\dd)$ and $\mu(G') = \umu(\dd)$.
    There is a sequence of graphs $G= G_1, G_2, \ldots, G_m = G'$ such that adjacent graphs are related by a swap.
	By Lemma \ref{lem: mu-swap}, $\mu(G_i)$ and $\mu(G_{i+1})$ differ by at most one.
	Hence $k$ must be equal to $\mu(G_i)$ for some $i$.
\end{proof}

\section{Computation}

We now show that
$\lmu(\dd)$ is computable in polynomial time.
This relies on the characterization of $\mu$ given in Corollary \ref{cor: mu-deleted-edges}.
The fundamental strategy for computing $\lmu(\dd)$ is the following. For any fixed disjoint pair $(S,T)$ of subsets of $V=\{v_1,\ldots,v_n\}$ with $|S|>|T|$, first compute the minimum of $|E_G(S; V\setminus T)|$ over all graphs $G$ on $V$ realizing $\dd$.
Then compute the minimum of these minimum values over all pairs $(S,T)$.

Of course, there are too many pairs $(S,T)$ of disjoint subsets.
The following lemmas help to reduce the number of pairs $(S,T)$ we need to check.

\begin{lemma}\label{lem: colored-vertex-swap}
Suppose that $G$ is a graph on vertex set $V$, and $V$ is colored by red and blue. Let $v_1, v_2$ be vertices in $V$ such that $d_G(v_1) \geq d_G(v_2)$. 
Then there exists a graph $G'$ on $V$ such that $d_G(v) = d_{G'}(v)$ for all $v \in V$,
and such that $v_1$ has at least as many red neighbors in $G'$ as $v_2$ had in $G$.
\end{lemma}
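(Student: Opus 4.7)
My plan is to prove the lemma by a swap-extremality argument. Write $R\subseteq V$ for the set of red vertices and $r_H(v):=|N_H(v)\cap R|$ for the number of red neighbors of $v$ in a graph $H$ on $V$, and set $k:=r_G(v_2)$. Among all graphs on $V$ with the degree sequence of $G$, choose $G^*$ maximizing $r(v_1)$; it suffices to show $r_{G^*}(v_1)\ge k$, which I would prove by contradiction.

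Suppose $r_{G^*}(v_1)<k$. Since $d(v_1)\ge d(v_2)\ge k>r_{G^*}(v_1)$, vertex $v_1$ has at least one blue neighbor $b$ in $G^*$; and since $|R|\ge k$ (the $k$ red neighbors of $v_2$ in $G$ are distinct red vertices different from $v_2$), there is a red non-neighbor $r$ of $v_1$ in $G^*$ with $r\ne v_1$. The key step is then to exhibit a $4$-cycle swap on vertices $\{v_1,b,r,u\}$ for a suitable $u\in V$: removing edges $v_1b$ and $ru$ and inserting $v_1r$ and $bu$ preserves all degrees (this is exactly the swap underlying Lemma~\ref{lem: mu-swap}) and strictly increases $r(v_1)$ by one, contradicting maximality of $G^*$.

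The most natural candidate is $u=v_2$, yielding the swap $\{v_1b,\,v_2r\}\leftrightarrow\{v_1r,\,v_2b\}$. This is admissible provided one can pick $r\in N_{G^*}(v_2)\setminus N_{G^*}(v_1)$ with $r\in R$ and $b\in N_{G^*}(v_1)\setminus(N_{G^*}(v_2)\cup\{v_2\})$ with $b\notin R$. Existence of such $r$ and $b$ should follow from a counting argument combining $d(v_1)\ge d(v_2)$ with the extremality of $G^*$: the latter, after a symmetric vertex-label swap, forces $r_{G^*}(v_2)\ge r_{G^*}(v_1)$, so the red neighbors of $v_2$ outside $N(v_1)$ are plentiful, and similarly for the blue neighbors of $v_1$ outside $N(v_2)$.

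The hard part will be the degenerate cases in which $u=v_2$ is blocked by neighborhood collisions---for instance, when every red neighbor of $v_2$ outside $N(v_1)$ happens to equal $v_1$ itself, or the analogous blue-side obstruction. In those situations I would instead try $u\in N_{G^*}(r)\setminus(N_{G^*}(b)\cup\{v_1,b,r\})$ and argue nonemptiness from the fact that $r$ has positive degree in $G^*$: any blockage of every such choice would force restrictive neighborhood inclusions that, combined with $d(v_1)\ge d(v_2)$, contradict the original hypothesis $r_G(v_2)=k>r_{G^*}(v_1)$ by tracing the red--blue cut back through the sequence of swaps relating $G$ and $G^*$.
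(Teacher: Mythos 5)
Your extremal setup discards exactly the information the proof needs. The hypothesis hands you $k$ red neighbors of $v_2$ \emph{in $G$}; once you pass to a maximizer $G^*$ of $r(v_1)$, you have no control over $N_{G^*}(v_2)$ --- all of $v_2$'s red neighbors may have migrated elsewhere --- so your ``natural candidate'' swap through $u=v_2$ has no reason to be available. The claim that extremality ``forces $r_{G^*}(v_2)\ge r_{G^*}(v_1)$'' is unsupported (if anything, maximality pushes the other way: were $r_{G^*}(v_2)>r_{G^*}(v_1)$, the $u=v_2$ swap would typically be available and contradict maximality), and even granted, it would not give $r_{G^*}(v_1)\ge k$, since the swap through $v_2$ trades a red neighbor of $v_2$ for a blue neighbor of $v_1$ and therefore can only drive $r(v_1)$ up to roughly the average of the two counts, not up to $k$. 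The fallback is also broken: a vertex $u\in N_{G^*}(r)\setminus(N_{G^*}(b)\cup\{b\})$ need not exist merely because $r$ has positive degree ($r$ could be a pendant vertex hanging on $b$, or $N(r)$ could lie entirely inside $N(b)\cup\{b\}$), and ``tracing the red--blue cut back through the sequence of swaps'' is a placeholder, not an argument. A smaller symptom of the same looseness: your counting produces one red non-neighbor of $v_1$, which could be $v_1$ itself, so the stipulation $r\ne v_1$ is not actually justified.

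The paper's proof works directly in $G$, where $v_2$'s red neighbors are in hand: as long as $v_1$ has strictly fewer red neighbors than $v_2$, there is a red vertex in $N(v_2)$ not adjacent to $v_1$, and since $d(v_1)\ge d(v_2)$ the vertex $v_1$ then has strictly more blue neighbors than $v_2$, so there is a blue vertex in $N(v_1)$ not adjacent to $v_2$; swapping the edges $v_2v_{\red}$ and $v_1v_{\blue}$ for $v_1v_{\red}$ and $v_2v_{\blue}$ hands a red neighbor of $v_2$ to $v_1$, and one iterates. The essential point you are missing is that the supply of red vertices to be transferred must be located in the \emph{original} graph at $v_2$, and the supply of blue vertices to be traded away is guaranteed by $d(v_1)\ge d(v_2)$; a clean way to organize this is to pair up, already in $G$, the red vertices of $N(v_2)\setminus(N(v_1)\cup\{v_1\})$ with blue vertices of $N(v_1)\setminus(N(v_2)\cup\{v_2\})$ and perform all of these swaps at once. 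If you insist on an extremal formulation, the contradiction must be derived from such a concrete pairing in $G$, not from an unspecified augmenting swap in $G^*$.
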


\begin{proof}
Assume that $v_1$ has strictly fewer red neighbors than $v_2$ in $G$ (if this is not the case, then $G'=G$ is good). We can find a red vertex $v_{\red}$ that is a neighbor of $v_2$ but not of $v_1$. 
Since $d_G(v_1) \geq d_G(v_2)$, $v_1$ has strictly more blue neighbors  than $v_2$, so there is a blue vertex $v_{\blue}$ which is a neighbor of $v_1$ but not of $v_2$. 
Swap  edges $v_2v_{\red}$ and $v_1v_{\blue}$ so that $v_1$ has one more and $v_2$ has one less red neighbor. 
To form $G'$, repeat this process until $v_1$ has at least as many red neighbors as $v_2$ had in $G$.
\end{proof}

\begin{lemma} \label{lem: S-T-degrees}
	Let $G$ be a graph with vertex set $V$, and $S$ and $T$ be disjoint subsets of $V$.
    Then there exists another graph $G'$ on $V$ and disjoint subsets $S'$ and  $T'$ of $V$ with the same sizes as $S$ and $T$
    such that $d_G(v) = d_{G'}(v)$ for all $v \in V$,
    the vertices of $S'$ are of lowest degree in $G'$, 
    the vertices of $T'$ are of highest degree in $G'$,
    and
    \begin{equation*}
    	|E_{G'}(S'; V\setminus T')| \leq |E_G(S; V\setminus T)|.
    \end{equation*}
\end{lemma}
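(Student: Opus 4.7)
My plan is to prove Lemma \ref{lem: S-T-degrees} by a sequence of swap moves, each invoking Lemma \ref{lem: colored-vertex-swap} to modify $G$ while exchanging a vertex in $S$ or $T$ for an outside vertex of more favorable degree, so that the quantity $\phi(G, S, T) := |E_G(S; V\setminus T)| = \sum_{s \in S} d_G(s) - i_G(S) - d_G(S, T)$ does not increase overall. The iteration terminates once $S$ consists of the $|S|$ lowest-degree vertices and $T$ of the $|T|$ highest-degree vertices (disjointness is ensured by fixed tie-breaking).

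For the basic move, suppose $v \in S$ and $w \in V \setminus (S \cup T)$ satisfy $d_G(w) < d_G(v)$. I would apply Lemma \ref{lem: colored-vertex-swap} with $v_1 = v$, $v_2 = w$, and red $= T$, obtaining $G'$ with the same degree sequence in which $|N_{G'}(v) \cap T| \geq |N_G(w) \cap T|$. A case analysis on whether the blue partner $v_{\blue}$ chosen at each elementary swap lies in $S$ or in $V \setminus (S \cup T)$ shows that each such swap changes $\phi(G, S, T)$ by $0$ or $-1$, so $\phi(G', S, T) \leq \phi(G, S, T)$. Then, redefining $S$ to $(S \setminus \{v\}) \cup \{w\}$, a careful accounting --- which uses the strict inequality $d(v) > d(w)$ to absorb the possible $+1$ contribution from the edge $vw$ (if it exists) --- shows that $\phi$ remains non-increasing. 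A symmetric move with red $= S$ handles the $T$-side, pushing $T$ toward the highest-degree configuration.

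The main obstacle will be the cross-case where the ideal partner vertex already lies in the opposite distinguished set (e.g., the low-degree $w$ meant to join $S$ is currently in $T$, or the high-degree $u$ meant to join $T$ is already in $S$). This requires a simultaneous exchange between $S$ and $T$, implemented by chaining two applications of Lemma \ref{lem: colored-vertex-swap}, together with a more intricate accounting of which edges lie in $i_G(S)$ and $d_G(S, T)$ before and after. Termination of the overall iteration is guaranteed by the strict decrease of the potential $\sum_{v \in S} d(v) - \sum_{v \in T} d(v)$ under every nontrivial move; hence after finitely many steps the sets $S'$ and $T'$ reach the required lowest- and highest-degree configurations, and the cumulative bound $\phi(G', S', T') \leq \phi(G, S, T)$ is established.
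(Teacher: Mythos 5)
Your overall strategy is the same as the paper's: perform exchange moves powered by Lemma \ref{lem: colored-vertex-swap} and use the potential $\gamma=\sum_{s\in S}d(s)-\sum_{t\in T}d(t)$ to force termination (the paper packages this as choosing $(G',S',T')$ minimizing $\gamma$ and deriving a contradiction, with your three kinds of moves appearing as its Cases 1--3). However, your basic $S$-side move is set up with the wrong coloring, and the step fails. Writing $U=V-S-T$, the quantity $|E_G(S;V\setminus T)|=i_G(S)+d_G(S,U)$ assigns weight $0$ to every $S$--$T$ edge, so equalizing the number of $T$-neighbors of $v$ and $w$ (your choice red $=T$) controls nothing relevant: an edge from an $S$-vertex to $T$ is free no matter which vertex of $S$ carries it. What actually must be controlled when you exchange $v\in S$ for $w\in U$ is the number of neighbors in $U$. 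A direct count (with $A=S\setminus\{v\}$, $B=U\setminus\{w\}$) shows the exchange changes $|E(S;V\setminus T)|$ by exactly $|N(w)\cap B|-|N(v)\cap B|$, so you need the \emph{departing} high-degree vertex $v$ to have at least as many neighbors in $U$ as the incoming $w$; this is why the paper's Case 3 colors $U'$ red, not $T'$.

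Concretely, your move breaks on the following example: let $v\in S$ have all $d(v)$ neighbors inside $S$, and $w\in U$ have all $d(w)=d(v)-1$ neighbors inside $U$. Both have zero $T$-neighbors, so your application of Lemma \ref{lem: colored-vertex-swap} with red $=T$ does nothing, yet the exchange turns $w$'s $d(w)$ uncounted $U$--$U$ edges into counted $S$--$U$ edges while $v$'s formerly internal edges remain counted as $S$--$U$ edges; the quantity increases by $d(w)$, not by at most $1$. So the claim that the strict inequality $d(v)>d(w)$ "absorbs a possible $+1$" is not a careful-accounting issue but a wrong invariant. The fix is to recolor: for the $S$/$U$ exchange take red $=U$ (your red $=S$ for the $T$-side move is correct, and for the direct $S$/$T$ exchange the paper uses red $=S\cup U$). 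With those colorings your iteration-plus-potential argument coincides with the paper's proof.
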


\begin{proof}
Consider all graphs $G'$ on $V$ such that $d_G(v) = d_{G'}(v)$ for all $v \in V$, and all disjoint subsets $S'$ and $T'$ of $V$ satisfying $|S'| = |S|,\; |T'| = |T|$, and 
\[|E_{G'}(S'; V\setminus T')| \leq |E_G(S; V\setminus T)|.\]
Choose the $G'$, $S'$, and $T'$ that minimize the difference $\gamma:=\sum_{s\in S'} d_{G'}(s)-\sum_{t\in T'} d_{G'}(t)$.
Define $U' := V-S'-T'$.
It is claimed that $S'$ consists of vertices of lowest degree and $T'$ consists of vertices of highest degree. 
If not, one of three cases must hold.
\vskip 1 em
\noindent \textbf{Case 1.} \enskip
There exists $v_1 \in S'$ and $v_2 \in T'$ such that $d(v_2) < d(v_1)$.
\\
Color $U' \cup S'$ red.
Apply the swaps described in Lemma \ref{lem: colored-vertex-swap} if necessary so that $v_1$ has at least as many red neighbors in the new graph as $v_2$ had in $G'$.
Then exchange $v_1$ and $v_2$ so that $S'$ now contains $v_2$ in place of $v_1$ and $T'$ contains $v_1$ in place of $v_2$.

\noindent \textbf{Case 2.} \enskip
There exists $v_1 \in U'$ and $v_2 \in T'$ such that $d(v_2) < d(v_1)$.
\\
Color $S'$ red.
As in the previous case, apply the swaps of Lemma \ref{lem: colored-vertex-swap} so that $v_1$ has at least as many red neighbors in the new graph as $v_2$ had in $G'$.
Then exchange $v_1$ and $v_2$ so that $U'$ contains $v_2$ in place of $v_1$ and $T'$ contains $v_1$ in place of $v_2$.

\noindent \textbf{Case 3.} \enskip
There exists $v_1 \in S'$ and $v_2 \in U'$ such that $d(v_2) < d(v_1)$.
\\
Color $U'$ red.
Apply the swaps of Lemma \ref{lem: colored-vertex-swap} so that $v_1$ has at least as many red neighbors in the new graph as $v_2$ had in $G'$.
Then exchange $v_1$ and $v_2$ so that $S'$ contains $v_2$ in place of $v_1$ and $U'$ contains $v_1$ in place of $v_2$.

In every case, Lemma $\ref{lem: colored-vertex-swap}$ shows that the quantity $|E_{G'}(S'; V\setminus T')|$ has not increased, while $\gamma$ has decreased, which is a contradiction. Thus, it must be that $S'$ and $T'$ contain the vertices of lowest and highest degree, respectively.
\end{proof}

\begin{lemma} \label{lem: choose-S-smaller}
	If $G$ is a graph on $n$ vertices, then
    there exist $S,T$ which are disjoint subsets of $V(G)$
    such that $(S,T)$ realizes $\mu(G)$, and $|S| \leq (n+1)/2$ and $|T| = |S| - 1$.
\end{lemma}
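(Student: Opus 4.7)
My plan is to start with any pair $(S, T)$ realizing $\mu(G)$, which exists by Corollary~\ref{cor: mu-deleted-edges}, and then to shrink $S$ one vertex at a time until $|S| - |T| = 1$. The bound $|S| \le (n+1)/2$ will then come for free, since disjointness of $S$ and $T$ inside a vertex set of size $n$ forces $n \ge |S| + |T| = 2|S| - 1$.

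The key observation driving the shrinking step is the following. Suppose $|S| - |T| \ge 2$, and let $v$ be any vertex of $S$. Put $S' = S \setminus \{v\}$. Then $S'$ and $T$ are still disjoint with $|S'| > |T|$, so $(S', T)$ is a legal pair in the minimization of Corollary~\ref{cor: mu-deleted-edges}. Moreover, any edge in $E_G(S'; V \setminus T)$ has an endpoint in $S' \subseteq S$ and none in $T$, so it is automatically an edge of $E_G(S; V \setminus T)$; that is, $E_G(S'; V \setminus T) \subseteq E_G(S; V \setminus T)$. Therefore $|E_G(S'; V \setminus T)| \le |E_G(S; V \setminus T)| = \mu(G)$, and minimality of $\mu(G)$ forces equality, so $(S', T)$ also realizes $\mu(G)$.

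Iterating this operation at most $|S|-|T|-1$ times produces a realizing pair with $|T| = |S|-1$, which is exactly what was required. The inequality $|S| \le (n+1)/2$ then follows as noted in the first paragraph.

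There is essentially no real obstacle; the only piece of ``content'' is the trivial containment $E_G(S'; V \setminus T) \subseteq E_G(S; V \setminus T)$, which is immediate from $S' \subseteq S$. In particular, I would not need to invoke Lemma~\ref{lem: colored-vertex-swap} or Lemma~\ref{lem: S-T-degrees} here, since no change of the underlying graph or its degree sequence is involved.
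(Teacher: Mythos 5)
Your proof is correct and rests on the same elementary observation as the paper's: adjusting the pair $(S,T)$ without touching the graph can only shrink $E_G(S;V\setminus T)$, so minimality forces the new pair to still realize $\mu(G)$. The paper organizes this as a two-case argument (enlarging $T$ with vertices outside $S\cup T$ when $|S|\le (n+1)/2$ already holds, and deleting vertices from $S$ otherwise), whereas you uniformly shrink $S$ and get the bound $|S|\le (n+1)/2$ for free from $2|S|-1\le n$; this is a mild streamlining, not a different method.
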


\begin{proof}
	Take any pair $(S, T)$ realizing $\mu(G)$.
    If $|S| \le (n+1)/2$, then we are done, as vertices outside of $S \cup T$ may be added to $T$ (if necessary) to attain $|T| = |S| - 1$.
    If $|S| > (n+1)/2$, then $|T|$ is at most $n - (n+1)/2 = (n-1)/2$.
    In this case, deleting $|S| - (|T|+1)$ vertices from $S$ provides the desired $(S, T)$. (In both cases $|E_G(S;V\setminus T)|$ does not increase.)
\end{proof}

The last ingredient in our algorithm for computing $\lmu(\dd)$ is a polynomial-time algorithm computing the minimum cost $b$-factor.
\begin{definition}
Given a graph $G$ on $n$ vertices and a nonnegative integer weight $b(v)$ for each vertex $v$ of $G$,
a subgraph $F$ is called a \emph{$b$-factor} if $d_F(v)=b(v)$ for every vertex $v\in V$. 
\end{definition}

The weighted problem is the following: given also a nonnegative integer cost $c$ for each edge of $G$,
what is the minimum total cost of a $b$-factor?

By the gadget of Tutte \cite{tutte54} it is easy to reduce this problem to finding a minimum cost perfect matching in a graph having $O(n^2)$ vertices. This later problem is solvable in polynomial time by Edmonds \cite{edmonds65}.

\bigskip

\noindent {\bf Theorem \ref{thm_lower-mu-poly}.} {\sl $\lmu(\dd)$ can be computed in polynomial  time.}

\begin{proof}
We give an algorithm to compute $\lmu(\dd)$.

We may assume that $d_1\ge d_2\ge\dots\ge d_n$.
Given $\dd$, check first whether it is graphical (for example using the linear time algorithm of \cite{kiraly11}). 
Let $K$ be the complete graph on $V=\{v_1\ldots,v_n\}$ and let $b(v_i)=d_i$ for all $i$.
A subgraph of $K$ is a $b$-factor if and only if it realizes $\dd$.

For all $k=1,\dots,\lfloor(n+1)/2\rfloor$, execute the following process:

Let $S=\{v_{n-k+1},\ldots,v_n\}$ and $T=\{v_1,\ldots,v_{k-1}\}$. Define a cost $c(uv)$ of each edge $uv$ of $K$: $c(uv)=0$ unless $u\in S$ and $v\not\in T$, in which case $c(uv)=1$.  
For a given $b$-factor, $G$, the cost of $G$ is exactly $|E_G(S; V\setminus T)|$.
Then calculate a minimum-cost $b$-factor. Call its cost OPT$(k)$.

Finally output $\min_{1\le k\le \lfloor(n+1)/2\rfloor}($OPT$(k))$.  By Lemmas \ref{lem: S-T-degrees} and \ref{lem: choose-S-smaller}, this is exactly $\lmu(\dd)$.
\end{proof}

\section{Existence of perfect 2-matchings}

What is surprising about $\lmu$ is that minimizing over exponentially many realizations of a given degree sequence is possible.
In contrast, it is unknown what the computational complexity of $\umu$ is.

Nonetheless, the relationship between perfect 2-matchings and perfect matchings lets us make some headway in checking whether $\umu$ is positive,
that is, if there exists a realization of a degree sequence with a perfect 2-matching.
Since even cycles have a perfect matching, the only obstruction to having a perfect matching in a graph with positive $\mu$ can be the existence of odd cycles in every perfect $2$-matching.
These may be addressed by the following lemma:
\begin{lemma}\label{lem: odd-cycle-swap}
	Let $G$ be a graph with a perfect 2-matching.
    Then there is another graph $G'$ with the same degree sequence as $G$ and a perfect 2-matching with at most one odd cycle.
\end{lemma}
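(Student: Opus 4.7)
The plan is to start with any perfect $2$-matching $F$ of $G$ and iteratively reduce the number of odd cycle components in $F$, modifying $G$ via swaps only when strictly necessary, until at most one odd cycle remains. Suppose $F$ has at least two odd cycles $C_1$ and $C_2$; I would split into two cases based on whether $G$ has an edge between $V(C_1)$ and $V(C_2)$.

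In the easy case, there exists $ab \in E(G)$ with $a \in V(C_1)$ and $b \in V(C_2)$. Since $C_1$ and $C_2$ are distinct components of $F$, this edge lies outside $F$, and I would leave $G$ untouched. Deleting $a$ from $C_1$ leaves a path on $|C_1|-1$ vertices, which is an even number, so it admits a perfect matching $M_1$ using edges of $C_1$; define $M_2$ analogously inside $C_2 \setminus \{b\}$. Replacing $E(C_1) \cup E(C_2)$ in $F$ by $\{ab\} \cup M_1 \cup M_2$ yields a perfect $2$-matching of $G$ in which the two former odd cycles are traded for $K_2$ components, so the odd cycle count drops by two.

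In the harder case, $G$ has no edges at all between $V(C_1)$ and $V(C_2)$. Pick any $uv \in E(C_1)$ and $xy \in E(C_2)$; by hypothesis, $ux$ and $vy$ are non-edges of $G$, so the swap exchanging $\{uv, xy\}$ for $\{ux, vy\}$ produces a graph $G'$ with the same degree sequence. Applying the corresponding modification to $F$ fuses $C_1$ and $C_2$ into a single cycle of length $|C_1| + |C_2|$, which is even; taking one of its two perfect matchings in place of this cycle then gives a perfect $2$-matching of $G'$ with two fewer odd cycles than $F$.

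Iterating this reduction through a sequence of realizations of the same degree sequence eventually produces the desired $G'$ with a perfect $2$-matching having at most one odd cycle. The only thing to verify at each step is that the newly constructed edge set really is a perfect $2$-matching, which follows by inspecting degrees and component types at the affected vertices; the case split is exhaustive and I do not anticipate any substantive obstacle beyond this bookkeeping.
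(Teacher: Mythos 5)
Your proof is correct, and it takes a genuinely different and in fact simpler route than the paper. The paper fixes one edge $u_1v_1$ of $C_1$ and one edge $u_2v_2$ of $C_2$ and tries to reroute both: merging the two odd cycles into one even cycle requires \emph{two} suitably placed cross edges ($u_1u_2$ and $v_1v_2$ both present, or both absent so a swap creates them), which forces a third ``exactly one is present'' case that the paper eliminates by a parity/2-coloring argument around $C_2$. You instead split on whether $G$ has \emph{any} edge between $V(C_1)$ and $V(C_2)$. Your key extra observation is that an odd cycle minus one vertex is a path on an even number of vertices and hence has a perfect matching inside the cycle's own edges; this lets a single cross edge $ab$ absorb both leftover vertices as a $K_2$, so you never need two cross edges and the awkward mixed case disappears. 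When no cross edge exists, any pair $uv\in E(C_1)$, $xy\in E(C_2)$ automatically gives an alternating $4$-cycle, so the swap is always available and fuses the two odd cycles into an even cycle, which you then break into a perfect matching. Both cases strictly decrease the number of odd cycles by two while preserving the degree sequence, so the iteration terminates as claimed. The trade-off is essentially aesthetic: the paper's argument stays closer to the ``replace two edges by two edges'' template used elsewhere for swaps, while yours buys a cleaner, exhaustive two-case analysis at the cost of restructuring the two cycle components entirely rather than locally.
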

\begin{proof}
	Let $F$ be a perfect 2-matching in $G$ with the minimum number of odd cycles.
	If $F$ has more than one odd cycle,
	we show how to construct another realization of the degree sequence of $G$
	with a perfect 2-matching with fewer odd cycles. 
    	Say $C_1$ and $C_2$ are distinct odd cycles in $F$.
    Our strategy is to either show  that $G[V(C_1)\cup V(C_2)]$  has a perfect matching or make a single swap in $G$ to achieve the same conclusion.
 
	Let $u_1v_1 \in E(C_1)$ and $u_2v_2 \in E(C_2)$.
	If both $u_1u_2$ and $v_1v_2$ are also edges in $G$,
	we could replace $u_1v_1$ and $u_2v_2$ in $F$ with $u_1u_2$ and $v_1v_2$ getting an even cycle that has a perfect matching.
	But if neither $u_1u_2$ and $v_1v_2$ are edges in $G$,
	then we could swap $u_1v_1$ and $u_2v_2$ to $u_1u_2$ and $v_1v_2$ to form $G'$, now $G'[V(C_1)\cup V(C_2)]$ has a perfect matching.
	
	Thus, we suppose towards contradiction that for all $u_1v_1\in E(C_1)$ and $u_2v_2 \in E(C_2)$, exactly one of $u_1u_2$ and $v_1v_2$ is an edge in $G$.
	Similarly, we may suppose exactly one of $u_1v_2$ and $u_2v_1$ is an edge in $G$.

	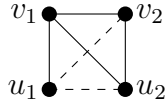
\begin{figure}[h]
		\centering
		\begin{tikzpicture}
			\draw (0,0) node[left](u1) {$u_1$} -- (0,1) node[left](v1){$v_1$} 
			-- (1,0) node[right](u2) {$u_2$} -- (1,1) node[right](v2) {$v_2$};
			\draw (0,1) -- (1,1);
			\draw[dashed] (0,0) -- (1,1);
			\draw[dashed] (0,0) -- (1,0);
			\fill (0,0) circle[radius = 0.1];
			\fill (0,1) circle[radius = 0.1];
			\fill (1,0) circle[radius = 0.1];
			\fill (1,1) circle[radius = 0.1];
		\end{tikzpicture}
		\caption{The situation for all $u_1v_1 \in E(C_1)$ and $u_2v_2 \in E(C_2)$, up to symmetry.}
	\end{figure}
	
	Suppose without loss of generality that $uv$ is an edge in $C_1$ such that both $u$ and $v$ are connected to $w_1\in V(C_2)$. Let the vertices of $C_2$ be (in order) $w_1,\dots,w_\ell$.
	Then both of $w_1$'s neighbors in $C_2$ (i.e., $w_2$ and $w_\ell$) must have no edges to any of $u$ and $v$ by our assumption.
	Similarly, $w_3$ and $w_{\ell-1}$ must be connected to both $u$ and $v$, and so on.  
	This allows us to 2-color $C_2$ according to whether a vertex has neither or both vertices $u$ and $v$ as neighbors, a contradiction since $C_2$ is odd.
	So either $F$ does not have the minimal number of odd cycles,
	or we may swap to a new graph with a perfect 2-matching with fewer odd cycles, as desired.
\end{proof}

For $n$ even, this reduces the existence of a perfect 2-matching to the existence of a perfect matching:
\begin{theorem}
	Let $\dd$ be a graphical degree sequence of even length.
    Then $\dd$ has a realization with a perfect 2-matching if and only if $\dd$ has a realization with a perfect matching.
\end{theorem}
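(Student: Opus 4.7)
The plan is to handle the two directions separately. The backward direction is essentially a triviality: if $\dd$ has a realization $G$ with a perfect matching $M$, then $M$ itself is a perfect $2$-matching of $G$ (each component is a $K_2$), so the same realization witnesses the other side.

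For the forward direction, I would first invoke Lemma \ref{lem: odd-cycle-swap} to replace the given realization $G$ with another realization $G'$ of $\dd$ that has a perfect $2$-matching $F'$ using at most one odd cycle. The key observation is then a parity argument on $F'$: a perfect $2$-matching decomposes $V(G')$ into $K_2$'s (each of even size $2$) and odd cycles (each of odd size). Since $|V(G')|=n$ is even, the sum of the sizes of the odd-cycle components is also even. Each odd cycle contributes an odd number of vertices, so the number of odd cycles in $F'$ must be even.

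Combining this parity constraint with the bound from Lemma \ref{lem: odd-cycle-swap}, the number of odd cycles in $F'$ is an even nonnegative integer that is at most $1$, so it must equal $0$. Hence every component of $F'$ is a $K_2$, i.e., $F'$ is a perfect matching of $G'$, and $G'$ is the desired realization of $\dd$.

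There is really no main obstacle here, since all the work is done by Lemma \ref{lem: odd-cycle-swap}; the only thing to be careful about is to make the parity observation explicit and to note that it is exactly the evenness of $n$ (the hypothesis of the theorem) that forces the number of odd cycles to be even, so that the Lemma's ``at most one'' can be strengthened to ``zero.''
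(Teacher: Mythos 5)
Your proposal is correct and follows the paper's proof exactly: the easy direction is that a perfect matching is a perfect $2$-matching, and the converse applies Lemma \ref{lem: odd-cycle-swap} followed by the same parity argument that an even number of vertices forces an even number of odd cycles, hence zero. No differences worth noting.
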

\begin{proof}
	One direction is clear, since a perfect matching is also a perfect 2-matching.
    Conversely, if $\dd$ has a realization with a perfect 2-matching,
    Lemma \ref{lem: odd-cycle-swap} shows that there exists a realization with a perfect 2-matching with at most one odd cycle.
    Since $\dd$ is of even length,
    the number of odd cycles in a perfect 2-matching must be even,
    so our perfect 2-matching in this realization has no odd cycles and hence it is a perfect matching.
\end{proof}

The $k$-factor theorem of Kundu \cite{kundu73} gives (as a special case) conditions for a degree sequence to have a realization with a perfect matching.
In particular, deciding whether a degree sequence has a realization with a perfect matching can be decided in polynomial time.

\begin{theorem}[Kundu \cite{kundu73}] \label{thm: kundu}
	Let $\dd$ be a graphical degree sequence. A realization $G$ of $\dd$ with a perfect matching exists if and only if the sequence $\dd-{\mathbf{1^n}}=(d_1-1,\dots,d_n-1)$ is also graphical. 
\end{theorem}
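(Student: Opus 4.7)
The forward direction is easy: if $G$ realizes $\dd$ and $M$ is a perfect matching in $G$, then $G - M$ realizes $\dd - \mathbf{1^n}$, so that sequence is graphical. Note that $n$ must be even, since $\sum d_i$ and $\sum(d_i - 1) = \sum d_i - n$ are both even.

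For the converse, my plan is to realize $\dd$ as an edge-disjoint union $H \cup M$, where $H$ is some realization of $\dd - \mathbf{1^n}$ on the common vertex set $V$ and $M$ is a perfect matching on $V$ with $E(M) \cap E(H) = \emptyset$. Equivalently, I want to find a realization $H$ of $\dd - \mathbf{1^n}$ whose complement $\overline{H}$ inside $K_n$ contains a perfect matching; adding that matching back will yield a realization of $\dd$ with a distinguished perfect matching.

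To produce such an $H$, I would start from an arbitrary realization $H_0$ of $\dd - \mathbf{1^n}$ (which exists by hypothesis) and let $M_0$ be a maximum matching of $\overline{H_0}$. If $|M_0| = n/2$ we are done. Otherwise, the key claim is that there is a single swap on $H_0$---replacing two edges $ab, cd \in E(H_0)$ with the non-edges $ac, bd$---that produces a new realization $H_1$ of $\dd - \mathbf{1^n}$ whose complement admits a matching strictly larger than $|M_0|$. Iterating this step terminates, since the matching number in the complement strictly increases and is bounded by $n/2$.

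The main obstacle is proving the swap claim. Pick vertices $u,v$ left unmatched by $M_0$. Since $M_0$ is maximum, $uv \in E(H_0)$ (otherwise $M_0 \cup \{uv\}$ would enlarge $M_0$), and by Berge's theorem no $M_0$-augmenting $u$-$v$ path exists in $\overline{H_0}$. I would analyze the $M_0$-alternating tree rooted at $u$ in $\overline{H_0}$ and locate a 4-vertex configuration in $H_0$ whose swap introduces the missing augmenting edge while preserving every degree. Making this selection precise requires a case analysis on how $E(H_0)$ blocks alternating access from $u$ to $v$; the parity of $n$ (forcing unmatched vertices to come in pairs) and the graphicality of $\dd$ itself (which guarantees enough ``room'' at each vertex for the swap to be legal, i.e., $ac, bd \notin E(H_0)$ to begin with) are the essential inputs that make such a local modification always available.
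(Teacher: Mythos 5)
The paper does not prove this statement at all: it is quoted as a known theorem of Kundu with a citation, and the whole point of invoking it is to avoid reproving it. So the only question is whether your sketch stands on its own, and it does not. Your forward direction and your reformulation of the converse (find a realization $H$ of $\dd-\mathbf{1^n}$ whose complement contains a perfect matching, then add that matching) are fine. But the entire content of Kundu's theorem is concentrated in your ``key claim'' that a single swap on $H_0$ always increases the matching number of $\overline{H_0}$, and you do not prove it --- you describe a plan (``I would analyze the alternating tree\dots'', ``requires a case analysis\dots'') and gesture at the graphicality of $\dd$ as providing ``enough room at each vertex.'' That is precisely the step that cannot be waved through.

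Here is a concrete reason the gap is real. Take $\mathbf{e}=(n-2,\dots,n-2,0)$, whose unique realization is $H_0=K_{n-1}$ plus an isolated vertex $v$. Then $\overline{H_0}$ is a star centered at $v$, so $\nu(\overline{H_0})=1$, and $H_0$ admits \emph{no} swap whatsoever: every non-edge of $H_0$ contains $v$, so one cannot find two disjoint non-edges $ac,bd$. Of course this is consistent with the theorem because $\dd=(n-1,\dots,n-1,1)$ is not graphical --- but it shows that your swap claim is simply false without invoking the graphicality of $\dd$, and that the needed input is a \emph{global} condition (Erd\H{o}s--Gallai-type inequalities on $\dd$), not local ``room at each vertex.'' Your sketch contains no mechanism by which that global hypothesis would enter the alternating-tree case analysis, so the local modification you posit need not exist. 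Known proofs of Kundu's theorem (Kundu's original argument, or Lov\'asz's) go through counting arguments against the Erd\H{o}s--Gallai conditions rather than a monotone single-swap improvement, and closing your gap would essentially amount to redoing one of those proofs. As written, the proposal establishes only the easy direction.
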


\begin{corollary}
Let $\dd$ be  a graphical degree sequence with even length. $\umu(\dd)>0$ if and only if $\dd-{\mathbf{1^n}}$ is also graphical. 
\end{corollary}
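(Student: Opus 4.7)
The corollary is an immediate consequence of chaining together the results already established. My plan is to translate the statement $\umu(\dd) > 0$ through two equivalences: first to the existence of a realization with a perfect $2$-matching, then to the existence of a realization with a perfect matching, and finally to the graphicality of $\dd - \mathbf{1^n}$.

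First I would unpack what $\umu(\dd) > 0$ means. By definition, $\umu(\dd) > 0$ exactly when there exists some realization $G$ of $\dd$ with $\mu(G) \geq 1$. The earlier corollary (the characterization of $\mu(G) = 0$ via independent sets $S$ with $|\Gamma_G(S)| < |S|$) combined with Tutte's theorem (Theorem \ref{thm_tutte}) shows that $\mu(G) \geq 1$ is equivalent to $G$ having a perfect $2$-matching. Hence $\umu(\dd) > 0$ if and only if some realization of $\dd$ admits a perfect $2$-matching.

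Next, since $\dd$ has even length, the theorem immediately preceding the corollary tells us that a realization of $\dd$ has a perfect $2$-matching if and only if some realization of $\dd$ has a perfect matching. Finally, Kundu's theorem (Theorem \ref{thm: kundu}) characterizes this last property: a realization of $\dd$ with a perfect matching exists if and only if $\dd - \mathbf{1^n}$ is graphical. Stringing these three equivalences together yields the claim.

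There is essentially no obstacle here, as all the heavy lifting was done in the preceding lemma and theorems. The only small care needed is to verify the logical chain keeps the ``exists a realization'' quantifier in place at each step, rather than turning it into a universal statement about all realizations; each cited result is stated precisely in the existential form we need, so this is automatic. The proof can be written in a few lines as a direct composition of Theorem (the preceding one) and Theorem \ref{thm: kundu}.
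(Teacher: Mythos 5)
Your proof is correct and follows exactly the chain the paper intends (the paper states this corollary without proof, immediately after the even-length perfect $2$-matching theorem and Kundu's theorem): $\umu(\dd)>0$ iff some realization has a perfect $2$-matching (via the $\mu=0$ characterization and Tutte's theorem), iff some realization has a perfect matching (by the preceding theorem, using even length), iff $\dd-\mathbf{1^n}$ is graphical (by Kundu). Your attention to keeping the existential quantifier intact at each step is exactly the right point of care, and nothing is missing.
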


%



\section{Regular degree sequences}

In this section we completely determine the values $\lmu(\dd)$ and $\umu(\dd)$ for all regular degree sequences.

\subsection{The minimal value of $\mu$}

Before computing exact values of $\lmu(\dd)$, we find bounds on the value of $\mu$.  Since deleting all edges incident to a given vertex in $G$ destroys all perfect 2-matchings, we immediately have an upper bound:

\begin{lemma} \label{lem: mu-less-delta}
	For any graph $G$, 
	$\mu(G) \leq \delta(G)$.
\end{lemma}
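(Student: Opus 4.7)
The plan is to invoke Corollary \ref{cor: mu-deleted-edges} directly and exhibit a concrete pair $(S,T)$ whose associated edge set has exactly $\delta(G)$ elements. Let $v \in V(G)$ be a vertex of minimum degree, and choose $S = \{v\}$, $T = \emptyset$. These are disjoint and satisfy $|S| = 1 > 0 = |T|$, so they are admissible in the minimization defining $\mu(G)$.

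With this choice, $V - S - T = V \setminus \{v\}$. There are no edges with both endpoints in $S$ (since $|S|=1$), so $E_G(S; V \setminus T)$ consists precisely of the edges joining $v$ to $V \setminus \{v\}$, that is, all edges incident to $v$. Therefore
\[
\mu(G) \le |E_G(S; V\setminus T)| = d_G(v) = \delta(G),
\]
which is the desired bound.

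As a sanity check, the same conclusion can be read off from Theorem \ref{thm: mu-p2m}(3): deleting the $\delta(G)$ edges incident to $v$ isolates $v$, and an isolated vertex cannot lie in any $K_2$ or odd cycle, so the resulting graph has no perfect $2$-matching; hence $\delta(G)$ edges suffice to block every perfect $2$-matching. There is no real obstacle here — the only thing to be careful about is the bookkeeping in the definition of $E_G(S; V\setminus T)$, specifically that with $T = \emptyset$ the ``outside'' set $V - S - T$ is all of $V\setminus S$, so the count is exactly $d_G(v)$.
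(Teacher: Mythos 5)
Your proof is correct and matches the paper's approach: the paper justifies this lemma in one line by observing that deleting all edges incident to a single vertex destroys every perfect $2$-matching, which is exactly your choice of $S=\{v\}$ for a minimum-degree vertex $v$ and $T=\emptyset$ in Corollary \ref{cor: mu-deleted-edges} (and your ``sanity check'' via Theorem \ref{thm: mu-p2m}(3) is literally the paper's argument). The bookkeeping that $|E_G(\{v\};V)|=d_G(v)=\delta(G)$ is handled correctly.
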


For regular graphs, the following lower bound also exists.

\begin{lemma} \label{lem: reg-mu-lower-bound}
	For any $k$-regular graph $G$,
    $\mu(G) \geq k/2$.
\end{lemma}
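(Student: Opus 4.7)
The plan is to apply Corollary \ref{cor: mu-deleted-edges}, which says that $\mu(G)$ equals the minimum of $|E_G(S; V\setminus T)|$ over disjoint pairs $(S,T)$ with $|S|>|T|$. So it suffices to show that for \emph{every} such pair, $|E_G(S; V\setminus T)|\ge k/2$.

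The key is to count the contribution of each edge incident to $S$ to the degree sum $\sum_{s\in S} d_G(s)=k|S|$. Every edge with both endpoints in $S$ contributes $2$; every edge from $S$ to $V-S-T$ contributes $1$; every edge from $S$ to $T$ contributes $1$. The first two kinds make up exactly the set $E_G(S;V\setminus T)$, while the edges from $S$ to $T$ form a separate block. Together this gives
\begin{equation*}
k|S| \;=\; 2\,i_G(S) \;+\; d_G(S, V-S-T) \;+\; d_G(S,T)
\;\le\; 2\,|E_G(S; V\setminus T)| \;+\; d_G(S,T).
\end{equation*}

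To control the remainder, I would bound $d_G(S,T)$ by the total degree of $T$: since $G$ is $k$-regular, $d_G(S,T)\le \sum_{t\in T} d_G(t)=k|T|$. Combining with the previous inequality yields
\begin{equation*}
2\,|E_G(S; V\setminus T)| \;\ge\; k|S| - k|T| \;=\; k(|S|-|T|) \;\ge\; k,
\end{equation*}
where the last step uses $|S|>|T|$ (so $|S|-|T|\ge 1$). Dividing by $2$ gives the claimed bound.

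There is no real obstacle — the entire argument is a one-line double count after invoking Corollary \ref{cor: mu-deleted-edges}. The only thing worth noting is where tightness can occur: equality requires $|S|=|T|+1$, $i_G(S)=0$, and every vertex of $T$ sending all its edges into $S$, which already hints at the constructions needed for the matching upper bound $\lmu(\dd)\le \lceil k/2\rceil$ that presumably follows.
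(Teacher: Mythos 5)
Your proof is correct and is essentially the same double count as the paper's: the paper bounds the number of edges from $S$ to $T$ from below by $k|S|-2\mu$ and from above by $k|T|\le k(|S|-1)$, which is exactly your inequality $k|S|\le 2\,|E_G(S;V\setminus T)|+d_G(S,T)\le 2\,|E_G(S;V\setminus T)|+k|T|$ specialized to a realizing pair. The only cosmetic difference is that you argue for every admissible pair $(S,T)$ rather than just an optimal one.
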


\begin{proof}
	Let $(S,T)$ realize $\mu(G)$.
    After deleting the $\mu$ edges of $E_G(S; V\setminus T)$, the number of edges leaving $S$ is at least $k|S| - 2\mu$.
    However, the number of edges leaving $T$ is at most $k|T| \le k(|S| - 1)$.
    Thus, $k|S| - 2\mu(G) \leq k|S| - k$, so $\mu(G) \geq k/2$.
\end{proof}

We will show the upper bound and the lower bound given by Lemmas $\ref{lem: mu-less-delta}$ and $\ref{lem: reg-mu-lower-bound}$ are tight for large enough $n$ (Theorem $\ref{thm: reg-mu-odd}$). However, if $(S,T)$ realizes $\mu(G)$ and we know the sizes of the sets $S$ and $T$, we can bound $\mu(G)$ more narrowly. The following lemma (together with Lemma \ref{lem: choose-S-smaller}) gives conditional bounds on the sizes of $S$ and $T$.

\begin{lemma} \label{lem: S-is-big}
	Suppose that $G$ has minimum degree $\delta(G) \geq 2$.
    Suppose that $\mu(G) \leq \delta(G) - 1$, and $S$ and $T$ are disjoint sets of vertices of $G$ such that $(S, T)$ realizes $\mu(G)$.
    Then $|S| \geq \delta(G)-1$,
    and if $|S| = \delta(G) - 1$ then $\mu(G) = \delta(G) - 1$.
\end{lemma}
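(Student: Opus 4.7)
The plan is to extract an inequality by counting degrees at $S$ and then solve for $|S|$. Write $s = |S|$ and $\delta = \delta(G)$; since $(S,T)$ realizes $\mu(G)$ we have $|T| \leq s - 1$.

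First I would decompose the sum of degrees over $S$:
\[
\sum_{v \in S} d_G(v) \;=\; 2 i_G(S) + d_G(S,T) + d_G(S, V-S-T) \;=\; i_G(S) + \mu(G) + d_G(S,T),
\]
where the second equality uses $\mu(G) = i_G(S) + d_G(S, V-S-T)$. Combining $\sum_{v \in S} d_G(v) \geq \delta s$ with the obvious bounds $i_G(S) \leq \mu(G)$ and $d_G(S,T) \leq s|T| \leq s(s-1)$ then yields the key inequality
\[
s(\delta - s + 1) \;\leq\; 2\mu(G).
\]

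Next I would substitute the hypothesis $\mu(G) \leq \delta - 1$ and rearrange to obtain $(s-2)(\delta - s - 1) \leq 0$. For $s \geq 3$ this immediately forces $\delta \leq s + 1$, i.e., $s \geq \delta - 1$; moreover, if $s = \delta - 1$ then equality holds throughout the chain, so $\mu(G) = \delta - 1$, yielding the second claim.

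The main obstacle is the degenerate small cases, where the factorization collapses and the general bound becomes vacuous. For $s = 1$: the set $E_G(S; V\setminus T)$ contains every edge incident to the unique vertex of $S$, so $\mu(G) \geq \delta$, contradicting the hypothesis. For $s = 2$: I would refine the estimates, using $i_G(S) \leq 1$ (rather than $\leq \mu(G)$) and $d_G(S,T) \leq 2$ in the degree identity, which improves the lower bound to $\mu(G) \geq 2\delta - 3$; combined with $\mu(G) \leq \delta - 1$ this forces $\delta \leq 2$, so $s = 2 > 1 = \delta - 1$ and the equality regime $s = \delta - 1$ does not arise in this range.
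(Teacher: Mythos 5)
Your proof is correct and follows essentially the same route as the paper: both derive the key inequality $2\mu(G) \geq s(\delta - s + 1)$ by double-counting edges at $S$ (the paper phrases it as each vertex of $S$ being incident to at least $\delta - s + 1$ deleted edges), factor the resulting quadratic with roots $s = 2$ and $s = \delta - 1$, and patch the degenerate cases $s = 1$ and $s = 2$ by a separate direct count. Your handling of $s=2$ (refining to $\mu \geq 2\delta - 3$ and forcing $\delta \leq 2$) is if anything slightly cleaner than the paper's.
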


\begin{proof}
	Let us write $s = |S|$ and $\delta = \delta(G)$.
    After deleting the $\mu(G)$ edges of $E_G(S; V\setminus T)$, a vertex in $S$ can have at most $s-1$ neighbors.
    Hence, each vertex of $S$ is incident to at least $\delta - s + 1$ edges to be deleted in $G$.
    Thus, 
    \begin{equation} \label{eq: small-s-bound}
    	\delta-1 \geq \mu(G) \geq \frac{s}{2}(\delta - s + 1).
    \end{equation}
    This reduces to the quadratic $s^2 - (\delta+1)s + 2(\delta-1) \geq 0$,
    which is zero at $s = 2$ or $s = \delta-1$.
    If $s=1$, then $\delta-s+1=\delta$ edges were deleted contradicting $\mu(G) < \delta$. If $s=2$ and $s<\delta-1$, then there are $2(\delta-1)>\delta$ deleted edges, a contradiction again.  
     Hence $s\geq \delta - 1$, as desired.

	If $s = \delta - 1$, then equality holds in \eqref{eq: small-s-bound}, showing that $\mu(G) = \delta - 1$.
\end{proof}

These results allow us to determine the value of lower $\mu$ when $\delta(G)$ is large relative to $n$.
\begin{lemma} \label{lem: 2k-3}
	If $G$ is a graph on $n$ vertices
   	and $\mu(G) < \delta(G)$,
    then $n \geq 2\delta(G) - 3$.
    If in addition $n = 2\delta(G) - 3$,
    then $\mu(G) = \delta(G) - 1$.
\end{lemma}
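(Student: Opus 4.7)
The plan is to combine the two structural lemmas just proved: Lemma \ref{lem: choose-S-smaller}, which bounds $|S|$ from above when we pick a realizing pair $(S,T)$ optimally, and Lemma \ref{lem: S-is-big}, which bounds $|S|$ from below in terms of $\delta(G)$ whenever $\mu(G) < \delta(G)$.

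Concretely, first I would invoke Lemma \ref{lem: choose-S-smaller} to fix a pair $(S,T)$ realizing $\mu(G)$ with $|T| = |S|-1$ and $|S| \leq (n+1)/2$. Next, since we are assuming $\mu(G) < \delta(G)$, Lemma \ref{lem: S-is-big} applies (its hypothesis $\delta(G) \geq 2$ is automatic: if $\delta(G) \leq 1$ then $\mu(G) < \delta(G)$ forces $\mu(G) = 0$ and $\delta(G) = 1$, a case that can be handled separately or ruled out because the conclusion $n \geq 2\delta(G)-3 = -1$ is vacuous). Lemma \ref{lem: S-is-big} gives $|S| \geq \delta(G) - 1$. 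Chaining the two inequalities yields
\begin{equation*}
    \delta(G) - 1 \;\leq\; |S| \;\leq\; \frac{n+1}{2},
\end{equation*}
which rearranges directly to $n \geq 2\delta(G) - 3$, proving the first claim.

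For the second claim, suppose $n = 2\delta(G) - 3$. Then both inequalities above are forced to be equalities, so $|S| = \delta(G) - 1$. But the second half of Lemma \ref{lem: S-is-big} says precisely that $|S| = \delta(G) - 1$ implies $\mu(G) = \delta(G) - 1$, finishing the proof.

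There is not really a hard step here; this lemma is a clean bookkeeping consequence of the two preceding lemmas. The only mild subtlety is to verify that the hypothesis $\delta(G) \geq 2$ of Lemma \ref{lem: S-is-big} can safely be assumed, and to notice that $|S| \leq (n+1)/2$ is actually integral, so it equals $\lfloor (n+1)/2 \rfloor = \delta(G)-1$ when $n = 2\delta(G)-3$, making the squeeze on $|S|$ exact.
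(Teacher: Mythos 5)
Your proof is correct and is essentially identical to the paper's: both combine Lemma \ref{lem: choose-S-smaller} ($|S| \leq (n+1)/2$) with Lemma \ref{lem: S-is-big} ($|S| \geq \delta(G)-1$) and then use the second half of Lemma \ref{lem: S-is-big} for the equality case. Your extra remark checking that the hypothesis $\delta(G) \geq 2$ can be assumed is a small point of care the paper omits, but it does not change the argument.
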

\begin{proof}
By Lemma \ref{lem: choose-S-smaller},
    we may choose $(S, T)$ realizing $\mu(G)$ such that $|S|$ is at most $(n+1)/2$.
By Lemma \ref{lem: S-is-big}, since $(S, T)$ realizes $\mu(G)$, $|S| \geq \delta(G) - 1$.
So $n+1\ge 2(\delta(G) - 1)$.
If in addition $n = 2\delta(G) - 3$, then $|S| = \delta(G) - 1$, so	Lemma \ref{lem: S-is-big} shows $\mu(G) = \delta(G) - 1$.
\end{proof}

Note that Lemma $\ref{lem: 2k-3}$ applies to all graphs, not only regular ones.
In addition, it proves that the complete graph $K_n$ satisfies $\mu(K_n) = n-1$ for $n \geq 6$, verifying the Manickam-Miklós-Singhi conjecture for graphs.

We use the notation $\regseq$ for the degree sequence $d_1=\dots=d_n=k$. Observe that $\regseq$ is graphical if and only if $k<n$ and $kn$ is even.

\begin{theorem}\label{thm: reg-mu-odd}
	For $n > k$ with $n$ odd and $k$ even,
\begin{equation*}
    	\lmu(\regseq) = \begin{cases}
        	k 	&  {\mathrm{ if \ }}\; n < 2k - 3 \\
            k-1 &  {\mathrm{ if \ }}\; n = 2k - 3 \\
            k/2	&  {\mathrm{ if \ }}\; n \geq 2k-1	.
        \end{cases}
    \end{equation*}
\end{theorem}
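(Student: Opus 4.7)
The plan is to handle each of the three regimes of the piecewise formula separately, using the structural bounds already established for the easy regimes and constructing explicit realizations for the tight instances.

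For $n < 2k-3$, Lemma \ref{lem: 2k-3} forces $\mu(G) \geq \delta(G) = k$ on every $k$-regular realization, while Lemma \ref{lem: mu-less-delta} gives the reverse inequality, so $\lmu(\regseq) = k$ without any construction. For $n = 2k-3$, Lemma \ref{lem: 2k-3} also yields $\mu(G) \in \{k-1, k\}$ for every $k$-regular $G$, so $\lmu(\regseq) \geq k-1$; to attain equality I would take $V = S \sqcup T$ with $|S| = k-1$ and $|T| = k-2$, put a $(k-1)$-cycle on $S$, a perfect matching on $T$ (using that $k$ is even), and add all edges of the complete bipartite graph between them. Vertex degrees are $2 + (k-2) = k$ in $S$ and $1 + (k-1) = k$ in $T$, and because $V = S \cup T$, the pair $(S,T)$ gives $|E_G(S;V\setminus T)| = i_G(S) = k-1$.

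For $n \geq 2k-1$, the lower bound $\lmu(\regseq) \geq k/2$ is Lemma \ref{lem: reg-mu-lower-bound}. The equality case of that proof indicates the matching construction should satisfy $|S|-|T|=1$, $T$ independent, and no edges from $S$ to $V\setminus(S\cup T)$. I would therefore take $V = S \sqcup T$ with $|S| = (n+1)/2$ and $|T| = (n-1)/2$, make $T$ independent, place a perfect matching on some $k$ vertices of $S$ (the ``inner'' ones, using $k$ even and $|S| \geq k$), and leave the remaining $|S|-k$ ``outer'' vertices of $S$ with no internal edges. The cross-edges should realize degrees $k-1$ at inner $S$-vertices, $k$ at outer $S$-vertices, and $k$ at every $T$-vertex. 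The pair $(S,T)$ then yields $|E_G(S;V\setminus T)| = i_G(S) = k/2$, matching the lower bound.

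The main obstacle is verifying that the near-regular bipartite degree sequence in the third case is realizable for every $n \geq 2k-1$. The degree sums match and both one-sided maxima are $k$, while $|S| \geq k$ and $|T| \geq k-1$, so Gale--Ryser applies; at the boundary $n = 2k-1$ the bipartite part is simply $K_{k,k-1}$, and for $n = 2k+1$ it is $K_{k+1,k}$ minus a perfect matching on the $k$ inner vertices, with the single outer vertex attached to all of $T$. Once bipartite realizability is settled, the three cases combine to give the claimed piecewise value of $\lmu(\regseq)$.
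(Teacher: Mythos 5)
Your proposal is correct and takes essentially the same route as the paper: the first two regimes are handled identically (Lemma \ref{lem: 2k-3} plus the $K_{k-1,k-2}$-with-cycle-and-matching construction), and your third construction---an unbalanced near-semiregular bipartite graph on $S\cup T$ with a $k/2$-edge matching inside $S$---is the same graph the paper obtains by deleting one vertex from a $k$-regular bipartite graph with parts of size $(n+1)/2$ and adding a perfect matching on that vertex's neighborhood. The only cosmetic difference is that you certify realizability of the bipartite part via Gale--Ryser rather than starting from an existing $k$-regular bipartite graph.
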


\begin{proof}
	Lemma \ref{lem: 2k-3} establishes the case of $n < 2k-3$, and shows that to prove the case of $n=2k-3$,
    it suffices to construct a realization $G$ of the degree sequence $\mathbf{k^{2k-3}}$ with $\mu(G) < k$.
    Start with the complete bipartite graph $K_{k-1, k-2}$. Let its color classes of size $k-1$ and $k-2$ be denoted by $S$ and $T$, respectively. Form $G$ by adding a perfect matching to $T$ and a cycle of size $k-1$ to $S$. The result is a $k$-regular graph. If the cycle in $S$ is deleted, then $S$ is independent with $|T|< |S|$ neighbors. Since the cycle in $S$ has $k-1$ edges, $\mu(G) \leq k-1$, as desired.
    
    Finally, suppose $n \ge 2k-1$.
    By Lemma \ref{lem: reg-mu-lower-bound}, it suffices to find a graph $G$ realizing the degree sequence $\regseq$ with $\mu(G) \le k/2$.
    Begin with a $k$-regular bipartite graph with parts of size $(n+1)/2$,
    which exists since $k \leq (n+1)/2$.
	$G$ is the graph formed by deleting a vertex and adding a perfect matching to the neighborhood of that vertex.
    If one deletes the perfect matching added, then what is left is a bipartite graph with parts of size $(n+1)/2$ and $(n-1)/2$, which cannot have a perfect 2-matching.
	Thus $\mu(G) \leq k/2$, as desired.
\end{proof}

\begin{lemma} \label{lem: n_even_small}
	Let $n$ be even.
    If $G$ is a $k$-regular graph on $n$ vertices
    and $\mu(G) < k$,
    then $n \geq 3k - 2$.
    If in addition $n \leq 3k-1$,
    then $\mu(G) = k - 1$.
\end{lemma}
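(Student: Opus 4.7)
My plan is to apply Lemma \ref{lem: choose-S-smaller} and Lemma \ref{lem: S-is-big} to select a pair $(S,T)$ realizing $\mu(G)$ with $|T| = s-1$ where $s := |S|$, satisfying $k-1 \le s \le n/2$ (the upper bound using that $n$ is even). Setting $U := V - S - T$ and $u := |U| = n - 2s + 1$, the goal is to show $u \ge k+1$ using $k$-regularity, which then forces $n = 2s - 1 + u \ge 2(k-1) - 1 + (k+1) = 3k-2$.

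The bookkeeping would go as follows. Summing degrees inside $S$ and using $\mu(G) = i(S) + d(S,U)$ gives $d(S,T) = ks - i(S) - \mu(G)$; feeding this into the degree sum inside $T$ produces $d(T,U) = \mu(G) + i(S) - k - 2\,i(T)$; and feeding both into the degree sum inside $U$ yields the identity
\[
2\,i(U) \;=\; ku + k + 2\,i(T) - 2\mu(G).
\]
Combining with $i(U) \le \binom{u}{2}$ gives the quadratic inequality
\[
u^2 - (k+1)u + c \;\ge\; 0, \qquad c := 2\mu(G) - k - 2\,i(T).
\]

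The constraints on $c$ are crucial. The nonnegativity $d(T,U) \ge 0$ together with $i(S) \le \mu(G)$ forces $c \ge d(S,U) \ge 0$, while $\mu(G) \le k-1$ and $i(T) \ge 0$ give $c \le k-2$. Since $u(k+1-u) \ge k$ for every integer $u \in \{1,\ldots,k\}$, the left side of the quadratic is at most $-k + c \le -2 < 0$ on that range, contradicting our inequality. As $u = n - 2s + 1$ is odd (because $n$ is even), $u \ne 0$, so $u \ge k+1$.

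For the final statement, $n \le 3k-1$ gives $2s + u \le 3k$, which combined with $u \ge k+1$ forces $s \le k - 1$ and hence $s = k-1$; the second conclusion of Lemma \ref{lem: S-is-big} then gives $\mu(G) = k-1$. I expect the main obstacle will be carrying out the three simultaneous degree-sum manipulations cleanly so that all dependence on $i(S)$ and $d(S,T)$ drops out of the final quadratic inequality; once that simplification is found, the rest is elementary arithmetic combined with the pointwise bound $u(k+1-u) \ge k$ on $\{1,\ldots,k\}$.
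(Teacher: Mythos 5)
Your proposal is correct and follows essentially the same route as the paper: both reduce to the degree-sum identity $2i(U) \ge k(u+1) - 2\mu(G)$ (you keep the $i(T)$ term exactly where the paper discards it, which only strengthens the inequality), bound $i(U)$ by $\binom{u}{2}$ to get a quadratic in $u$ that excludes $u \in \{1,\dots,k\}$, use the parity of $u$ to rule out $u=0$, and finish with Lemma \ref{lem: S-is-big} exactly as the paper does. The only differences are cosmetic (factoring the quadratic as $(u-1)(k-u)$ versus your pointwise bound $u(k+1-u)\ge k$).
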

\begin{proof}
   By Lemma \ref{lem: choose-S-smaller}, there exists $(S, T)$ which realizes $\mu = \mu(G)$ with $|T| = |S|-1$.
    Then $S \cup T$ has an odd number of vertices and so is not all of $V$.
    Let $U = V - S - T$, and let $u = |U|,\; s=|S|,\; t=|T|=s-1$.
Remember that $\mu = |E_G(S; V\setminus T)|=i_G(S)+d_G(S,U)$.
    By counting the number of half-edges incident to vertices of $S$, we obtain
    \begin{equation} \label{eq: n-even-regular-S}
    	ks = 2(\mu - d_G(S,U)) + d_G(S,U) + d_G(S,T) = 2\mu - d_G(S,U) + d_G(S,T).
    \end{equation}
    Exactly $ku - 2i_G(U) - d_G(S,U)$ edges go from $U$ to $T$,
    so by counting the number of half-edges incident to $T$, we obtain
    \begin{equation} \label{eq: n-even-regular-T}
    	kt=k(s-1) \geq (ku - 2i_G(U) - d_G(S,U)) + d_G(S,T).
    \end{equation}
    Since $i_G(U) \leq u(u-1)/2$, 
    subtracting \eqref{eq: n-even-regular-S} from \eqref{eq: n-even-regular-T}, and simplifying gives 
   \[
    	2\mu \geq k(u+1) - 2i_G(U) \geq 2k + (u-1)(k-u).
  \] 
	As $\mu < k$, we see either $u < 1$ or $u > k$.
    But $U$ is non-empty, so $u \geq k + 1$.
    Thus, using Lemma \ref{lem: S-is-big} we get
    \begin{equation*}
		n = u + s + t \geq (k+1) + (k-1) + (k-2) = 3k - 2.
    \end{equation*}
    Since $s+t=2s-1=n-u$, we also have
    \begin{equation} \label{eq: n_even_small_S}
    	s \leq \frac{n - u + 1}{2} \leq \frac{n-k}{2}.
    \end{equation}
    If $n \leq 3k - 1$,
    then \eqref{eq: n_even_small_S} shows $|S| \leq k- 1$,
    so by Lemma \ref{lem: S-is-big}, $\mu(G) = \delta(G) - 1=k-1$.
\end{proof}

\begin{theorem}\label{thm: reg-mu-even}
	For $n > k$ with $n$ even,
    \begin{equation*}
    	\lmu(\regseq) = \begin{cases}
        	k 	&  {\mathrm{ if \ }}\; n < 3k - 2 \\
            k-1	&  {\mathrm{ if \ }}\; n = 3k-2\text{ or }3k-1 \\
            \ceil{k/2}	&  {\mathrm{ if \ }}\; n \geq 3k.
        \end{cases}
    \end{equation*}
\end{theorem}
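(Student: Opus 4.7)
The plan is to establish the theorem in the three ranges $n < 3k-2$, $n \in \{3k-2, 3k-1\}$, and $n \geq 3k$ separately, in each case combining a lower bound from an already-proved lemma with an explicit matching construction. For $n < 3k-2$, Lemma \ref{lem: n_even_small} forces every realization of $\regseq$ to have $\mu \geq k$, and since $\mu \leq \delta = k$ by Lemma \ref{lem: mu-less-delta}, we conclude $\lmu = k$. For $n \in \{3k-2, 3k-1\}$, Lemma \ref{lem: n_even_small} forces any realization with $\mu < k$ to satisfy $\mu = k-1$, so $\lmu \geq k-1$. For $n \geq 3k$, Lemma \ref{lem: reg-mu-lower-bound} combined with the integrality of $\mu$ gives $\lmu \geq \lceil k/2 \rceil$.

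All the upper-bound constructions follow a common template: partition $V$ as $S \sqcup T \sqcup U$ with $|S| > |T|$, declare $S$ complete to $T$, and fill in the remaining edges so that every vertex has degree $k$. The quantity $|E_G(S; V \setminus T)|$ then equals the number of edges inside $S$ plus the number of $S$-$U$ edges, which we minimize. For $n \in \{3k-2, 3k-1\}$ I take $|S| = k-1$ and $|T| = k-2$, so that each $S$-vertex needs $2$ more edges and each $T$-vertex needs $1$. Placing a $(k-1)$-cycle inside $S$ accounts for the $S$-vertices and contributes exactly $k-1$ bad edges. When $n = 3k-2$ (so $k$ is even and $|U| = k+1$), the rest is completed by a perfect matching on $T$ and $K_{k+1}$ on $U$; when $n = 3k-1$ (so $k$ is odd and $|U| = k+2$), the rest is completed by a near-perfect matching on $T$, one $T$-$U$ edge, and a graph on $U$ with one vertex of degree $k-1$ and the remaining vertices of degree $k$.

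For $n \geq 3k$ I take $|S| = k$ and $|T| = k-1$, so $S$ complete to $T$ saturates $T$ entirely and leaves each $S$-vertex needing one more edge. When $k$ is even, a perfect matching on $S$ contributes $k/2$ bad edges, and $U$ is completed as any $k$-regular graph on $n-2k+1$ vertices. When $k$ is odd, I match $k-1$ of the $S$-vertices and join the remaining one to $U$ via a single edge, contributing $(k+1)/2$ bad edges, and I complete $U$ as a graph with one vertex of degree $k-1$ and the rest of degree $k$. A direct count gives $|E_G(S; V \setminus T)| = \lceil k/2 \rceil$ in both cases, matching the lower bound.

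The main obstacle is verifying that the near-regular degree sequence $(k, k, \ldots, k, k-1)$ on $|U|$ vertices used in the $k$-odd subcases is graphical; this follows from the Erd\H{o}s--Gallai conditions once $|U| \geq k+2$, and an explicit realization can be obtained by removing from $K_{k+2}$ the edges of a near-perfect matching on all but three vertices together with two edges sharing a common vertex among those three. Parity considerations (ensuring each degree sum is even) are handled automatically by the choice of partition sizes: $|U| = n - 2k + 1$ is odd whenever $n$ is even, which is precisely what the $k$-odd subcase requires.
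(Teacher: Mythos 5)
Your proposal is correct: the lower bounds are obtained exactly as in the paper (Lemma \ref{lem: n_even_small} for $n\le 3k-1$ and Lemma \ref{lem: reg-mu-lower-bound} for $n\ge 3k$), and for $n=3k-2$ and $n=3k-1$ your constructions are structurally identical to the paper's (the $K_{k-1,k-2}$ core with a $(k-1)$-cycle on the large side, a (near-)perfect matching on the small side, and a clique-like block on the remaining $k+1$ or $k+2$ vertices). Where you genuinely diverge is the case $n\ge 3k$: the paper recycles its odd-$n$ constructions, taking for $k$ even the disjoint union of the Theorem \ref{thm: reg-mu-odd} graph on $n-k-1$ vertices with $K_{k+1}$, and for $k$ odd a $k$-regular bipartite graph minus a vertex glued to an auxiliary graph $H$ via a perfect matching on the $k+1$ deficient vertices; you instead apply your uniform template with $|S|=k$, $|T|=k-1$, $S$ complete to $T$, a (near-)perfect matching inside $S$, and a (near-)regular filler on $U$. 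Both certify $\mu\le\lceil k/2\rceil$ via Corollary \ref{cor: mu-deleted-edges}; your version has the virtue of treating all three regimes with one picture and making the count $i_G(S)+d_G(S,U)=\lceil k/2\rceil$ completely transparent, at the cost of having to verify graphicality of the near-regular sequence $(k,\dots,k,k-1)$ on $|U|$ vertices (which you do correctly: the parity works out because $|U|=n-2k+1$ is odd exactly when $k$ must be odd, and such near-regular sequences with even sum and $|U|\ge k+2$ are graphical), whereas the paper leans on already-constructed graphs and the disjoint-union trick. I see no gap in your argument.
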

\begin{proof}
	The case of $n<3k-2$ is immediate from Lemma \ref{lem: n_even_small}.
    To prove the case of $n=3k-1$ or $3k-2$, all that remains is to demonstrate a realization, $G$, of the degree sequence $\regseq$ with $\mu(G)<k$:
    
    If $n=3k-2$, then $k$ must be even as $n$ is even.
    Hence, by Theorem \ref{thm: reg-mu-odd}, there is a $k$ regular graph on $2k-3$ vertices with a $\mu$ value of $k-1$.
    Let $G$ be the disjoint union of this graph with $K_{k+1}$.
    
    Suppose $n=3k-1$.
    Begin with two components.
    One component is the complete graph $K_{k+2}$.
    For the other component, start with the complete bipartite graph $K_{k-1,k-2}$.
    The degree of $k-2$ vertices of one part is $k-1$, and since $k-2$ is odd, it is possible to add a perfect matching to all but one of its vertices, $v_1$.
    The degree of the $k-1$ vertices on the other side is $k-2$, so add a cycle, $C$, of length $k-1$.
	Choose any vertex from the $K_{k+2}$ component, call it $v_2$.
    Let $a_1$ and $a_2$ be any neighbors of $v_2$ in $K_{k+2}$.
    Then delete a perfect matching from the remaining $k-1$ vertices of $K_{k+2}$, so that those $k-1$ vertices now have degree $k$.
    Delete the edges $a_1v_2$ and $a_2v_2$ and add the edge $v_1v_2$. 
    
    Now the graph is $k$-regular on $3k-1$ vertices.
    If the $k-1$ edges of cycle $C$ are deleted, then there is an independent set of size $k-1$ with $k-2$ neighbors.
    Hence $\mu(G)<k$, as desired.
    
    Finally, suppose $n \ge 3k$.
    By Lemma \ref{lem: reg-mu-lower-bound}, it suffices to find a graph $G$ realizing the degree sequence $\regseq$ with $\mu(G) \le \ceil{k/2}$.
    If $k$ is even, let $G'$ be a graph on $n - (k+1)$ vertices with $\mu(G') = k/2$,
    and let $G$ be the disjoint union of $G'$ with a complete graph on $k+1$ vertices.\\
    If $k$ is odd and $n \ge 3k+1$ is even, construct $G$ as follows.
    Let $G'$ be a $k$-regular bipartite graph with parts of size $(n - k - 1)/2$,
    which exists since $n \geq 3k + 1$ implies $(n-k-1)/2 \geq k$.
    Let $H$ be a graph on $k+2$ vertices with degree sequence $(k, k, \ldots, k, k -1)$.
    If $v$ is any vertex of $G'$,
    then $(G' - v) \cup H$ will have $k+1$ vertices of degree $k-1$, while all other vertices have degree $k$.
    Construct $G$ by adding a perfect matching to the $k+1$ vertices of degree $k-1$ in $(G' - v) \cup H$.
    After deleting the $(k+1)/2$ edges in this perfect matching from $G$,
    $G' - v$ is a connected component with $\mu = 0$,
    since it is a bipartite graph with different size color classes.
    Hence $\mu(G) \leq (k+1)/2$.
\end{proof}

\subsection{The maximal value of $\mu$}

This section is devoted to proving that $\umu(\regseq)=k$
whenever $\regseq$ is graphical
(i.e., if $k<n$ and not both $k$ and $n$ are odd), 
except for some sporadic small cases.
We start with the easier case when $n$ is even.

\begin{lemma}\label{lem:umu_n_even}
Suppose $n$ is even and $1\le k<n$. Then $\umu(\regseq)=k$.
\end{lemma}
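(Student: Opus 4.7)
The upper bound $\umu(\regseq) \le k$ is immediate from Lemma \ref{lem: mu-less-delta}, since $\delta(G) = k$ for every $k$-regular $G$. The plan is therefore to exhibit, for each admissible pair $(n,k)$, a $k$-regular realization $G$ with $\mu(G) \ge k$. Because $n$ is even, a perfect matching is itself a perfect $2$-matching, so it is enough to produce $G$ such that $G - E'$ admits a perfect matching for every $E' \subseteq E(G)$ with $|E'| \le k - 1$.

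I would split into two ranges. If $n < 2k$, then $n \le 2k - 2$, which is strictly less than $3k - 2$ provided $k \ge 2$, so any $k$-regular realization (which exists by a standard circulant construction, since $kn$ is even) satisfies $\mu(G) = k$ directly by Lemma \ref{lem: n_even_small}. The case $k = 1$ is trivial, taking $G$ to be a perfect matching.

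If instead $n \ge 2k$, I would let $G$ be a $k$-regular bipartite graph with parts $A, B$ of common size $n/2 \ge k$ (for example, the bipartite circulant joining $i \in A$ to $i, i+1, \dots, i + k - 1 \pmod{n/2}$ in $B$). Fix any $E' \subseteq E(G)$ with $|E'| \le k - 1$ and verify Hall's condition for $G - E'$: were some $S \subseteq A$ to satisfy $|N_{G - E'}(S)| \le |S| - 1$, the edges of $G - E'$ incident to $S$ would all end in $N_{G - E'}(S)$ and thus number at most $k(|S| - 1)$; but they also number at least $k|S| - |E'| \ge k|S| - (k - 1)$, and comparing the two gives $1 \le 0$, a contradiction. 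A symmetric check handles $S \subseteq B$, and then Hall's theorem yields a perfect matching in $G - E'$.

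I do not foresee any serious obstacle. The Hall estimate in the bipartite construction is the main content of the argument, and the arithmetic is uniform in $k \ge 1$. The only minor subtleties are the routine existence of the two regular graphs (an all-purpose circulant in the small-$n$ range, a bipartite circulant in the large-$n$ range) and verifying that the range $n < 2k$ with $k \ge 2$ sits inside the hypothesis $n < 3k - 2$ of Lemma \ref{lem: n_even_small}.
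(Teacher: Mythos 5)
Your argument is correct, but it takes a genuinely different route from the paper's. The paper's proof is a two-line construction: for even $n$, the edge set of $K_n$ decomposes into $n-1$ perfect matchings $M_1,\dots,M_{n-1}$, and $G=M_1\cup\dots\cup M_k$ is $k$-regular and contains $k$ pairwise edge-disjoint perfect matchings, so no set of fewer than $k$ edges can meet all of them; hence $\mu(G)\ge k$, and combined with $\mu(G)\le\delta(G)=k$ this gives equality with no case analysis. You instead split on $n$ versus $2k$: for $n<2k$ you invoke Lemma \ref{lem: n_even_small} (legitimate --- it precedes this lemma in the paper, so there is no circularity), and for $n\ge 2k$ you build a $k$-regular bipartite graph on equal parts and verify Hall's condition robustly after deleting any $k-1$ edges; the counting there is sound ($k|S|-(k-1)$ versus $k(|S|-1)$ forces $1\le 0$). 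What the paper's approach buys is uniformity and brevity, at the cost of quoting the classical $1$-factorization of $K_n$; what yours buys is a more self-contained and quantitative argument in the large-$n$ range, though note that your bipartite circulant itself decomposes into $k$ perfect matchings by K\H{o}nig's edge-coloring theorem, so the paper's disjoint-matchings counting would apply to it directly and render the Hall computation unnecessary. Two minor remarks: since $|A|=|B|$, the Hall check on $S\subseteq A$ alone already yields a perfect matching, so the symmetric check on $B$ is redundant; and $2k-2<3k-2$ holds for all $k\ge 1$, not just $k\ge 2$ (though for $k=1$ the range $n<2k$ is empty anyway).
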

\begin{proof}
	It is well known that the edge-set of the complete graph on $n$ vertices decomposes into perfect matchings, i.e., $E(K_n)=M_1\cup M_2\cup\dots\cup M_{n-1}$. Let $G=M_1\cup\dots\cup M_k$. Clearly $G$ is $k$-regular and less than $k$ edges cannot block every perfect matching.
\end{proof}

When $n$ is odd, we compute $\mu$ of a certain family of graphs with high symmetry.

\begin{definition}
For even $k$, the $k$-regular \emph{circulant} on $n$ vertices, denoted by $C(k,n)$, is the graph with vertex set $\{1,2,\dots,n\}$, where $i$ and $j$ are adjacent if $|i-j|\le k/2$. For $i<j$, here $|i-j|$ denotes $\min(j-i,\; i+n-j)$.
\end{definition}


\begin{theorem} \label{thm: odd-circulant-mu}
Let $n\ge 9$ be odd, and $4\le k < n$ be even. Then $\mu(C(k,n)) = k$.
\end{theorem}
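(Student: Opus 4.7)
The upper bound $\mu(C(k,n)) \leq k$ follows from Lemma~\ref{lem: mu-less-delta}. To establish $\mu(C(k,n)) \geq k$ I would argue by contradiction: if $\mu(C(k,n)) < k$, then Corollary~\ref{cor: mu-deleted-edges} together with Lemmas~\ref{lem: choose-S-smaller} and \ref{lem: S-is-big} produces a realizing pair $(S,T)$ with $|T| = |S|-1$, $k-1 \leq |S| \leq (n+1)/2$, and $|E_{C(k,n)}(S;V\setminus T)| < k$. When $|S| = k-1$ the equality case of Lemma~\ref{lem: S-is-big} forces every vertex of $S$ to have $T$ entirely inside its neighborhood, so $T \subseteq \bigcap_{v \in S} N(v) \setminus S$; a direct calculation in $C(k,n)$, splitting on the cyclic span of $S$, shows this set always has fewer than $k-2$ vertices, contradicting $|T| = k-2$.

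For $|S| \geq k$, let $A = S \cup T$ and $r = k/2$. Rearranging gives $|E_{C(k,n)}(S;V\setminus T)| = k|S| - (i(A) - i(T))$, so the assumption becomes $i(A) - i(T) > k|T|$. The key tool is the edge-isoperimetric bound $i(A) \leq \phi(|A|)$, where $\phi(a)$ is the maximum of $i(X)$ over $a$-element subsets of $V(C(k,n))$ and is attained by any contiguous arc; setting $g = n-a$,
\[
\phi(a) \;=\; ra - \tbinom{r+1}{2} + \tbinom{\max(r-g+1,0)}{2}.
\]
(The arc-optimality is a standard compression argument for circulants.) When $|A| < n$, the gap $g = n - |A|$ is a positive even integer, because both $n$ and $|A| = 2|S|-1$ are odd. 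A direct calculation then gives $\phi(|A|) - k|T| = r - \tbinom{r+1}{2} + \tbinom{\max(r-g+1,0)}{2} \leq -1$ whenever $g \geq 2$ and $r \geq 2$, contradicting $i(A) - i(T) > k|T|$.

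The decisive case is $|A| = n$, i.e.\ $|S|=(n+1)/2$, $|T|=(n-1)/2$, and $S \sqcup T = V$. Here $|E_{C(k,n)}(S;V\setminus T)| = i(S)$, and the $k$-regularity identity $i(S) - i(T) = \tfrac{k}{2}(|S| - |T|) = r$ reduces $i(S) \geq k$ to the claim
\[
i(T) \;\geq\; r \quad\text{for every } T \subseteq V(C(k,n)) \text{ with } |T| = (n-1)/2.
\]
This is the main obstacle of the proof, and is precisely where the hypothesis $n \geq 9$ enters: the set $T = \{0, 3, 4\} \subset V(C(4,7))$ gives $i(T) = 1 < r = 2$, so the claim genuinely fails for $n = 7$. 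My plan is to analyze the decomposition of $T$ into $p$ maximal arcs of lengths $a_1, \ldots, a_p$ separated by gaps of lengths $b_1, \ldots, b_p$ on the underlying $n$-cycle, with $\sum_i a_i = (n-1)/2$ and $\sum_j b_j = (n+1)/2$. If $p \leq (n-1-k)/2$, then the distance-$1$ layer contributes $(n-1)/2 - p \geq r$ edges to $i(T)$ on its own. Otherwise $p$ is large and the gap constraint forces many short gaps $b_j \leq r-1$, each of which produces $\tbinom{r-b_j+1}{2}$ additional cross-arc edges in the higher distance layers. The heart of the argument is to aggregate these within-arc and cross-arc contributions across all combinations of arc count and gap profile, with $n \geq 9$ serving as exactly the threshold that makes the bookkeeping close.
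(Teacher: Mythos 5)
Your reductions are sound and you have correctly located the crux: after disposing of the configurations with $U=V-S-T\neq\emptyset$, everything comes down to showing $i_G(S)\ge k$ when $|S|=(n+1)/2$ and $T=V\setminus S$, equivalently $i_G(T)\ge k/2$ for every $T$ of size $(n-1)/2$ (your $n=7$ counterexample showing why $n\ge 9$ is needed is exactly right). But that decisive case is precisely where your write-up stops being a proof and becomes a plan: ``aggregate these within-arc and cross-arc contributions across all combinations of arc count and gap profile, with $n\ge 9$ serving as exactly the threshold that makes the bookkeeping close'' is a statement of intent, not an argument, and it is the hardest part of the theorem. Two further ingredients are also asserted rather than proved: the claim that contiguous arcs maximize $i(X)$ over $a$-subsets of $C(k,n)$ (plausible, and your formula for $\phi(a)$ checks out for arcs, but a compression argument for powers of cycles is a nontrivial lemma that must be supplied), and the ``direct calculation'' in the $|S|=k-1$ case showing the common neighborhood of $S$ is too small. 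As it stands the proposal establishes the easy upper bound and the $U\neq\emptyset$ case (modulo the isoperimetric lemma), but not the theorem.

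For comparison, the paper reaches the same endgame by a different and much lighter route: it works with $S$ rather than $T\cup S$, writes $\mu\ge i_G(S)+|\Gamma_G(S)|-|T|$, and encodes $S$ by its cyclic gap sequence $n_1,\dots,n_s$ with $\sum n_i=n$. Crude bounds ($|\Gamma_G(S)|=\sum\min\{k,n_i-1\}$ and $i_G(S)\ge|\{i:n_i\le k/2\}|$) force $n_i\le k$ for all $i$ and $n_i\le k/2$ for all but one $i$, which already pins down $n=2s-1$, i.e.\ your decisive case. That case is then closed in three lines: since $\sum_i(n_i+n_{i+1})=2n=4s-2$, at least two indices satisfy $n_i+n_{i+1}\le 3$, so the chords $v_iv_{i+2}$ give $i_G(S)\ge s+1$ and a contradiction (with a separate two-line count for $k=4$). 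If you want to salvage your approach, I would suggest replacing your arc-and-gap bookkeeping for $T$ by this pigeonhole on the gap sequence of $S$; it is the step your plan is missing.
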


\begin{proof}
	Let $G = C(k,n)$ and assume towards contradiction that $\mu(G)<k$.
    By Lemma \ref{lem: choose-S-smaller}, there exists $(S, T)$ which realizes $\mu(G)$ with $s=|S|\le (n+1)/2$ and $t=|T|=s-1.$

   The edges to be deleted to realize $\mu(G)$ are either those internal to $S$ or those that go from $S$ to $\Gamma (S) - T$. The number of these latter edges is at least $|\Gamma_G(S)| - |T|$. Thus,
	\begin{equation} \label{mu_bd}
	k-1 \geq \mu(G) \geq i_G(S) + |\Gamma_G(S)| - |T| = i_G(S) + |\Gamma_G(S)| - s + 1.
	\end{equation}
	Suppose the vertices of $S$ are $v_1=v_{s+1}< v_2< \ldots< v_s$.
	Let $n_i = v_{i+1} - v_i$ for $i=1,\dots,s-1$, and $n_s=v_1+n-v_s$. Thus $\sum n_i=n$.
    
    Between $v_i$ and $v_{i+1}$, there are precisely $\min\{k, n_i - 1\}$ neighbors of either $v_i$ or $v_{i+1}$.
    Hence, 
    \begin{equation} \label{eq: circulant-gamma}
    	|\Gamma_G(S)| = \sum_{i=1}^s \min\{ k, n_i - 1\}.
    \end{equation}
    In addition, if $n_i \leq k/2$, then the vertices $v_i$ and $v_{i+1}$ are adjacent, so
    \begin{equation} \label{eq: circulant-i}
    	i_G(S) \geq |\{i: n_i \leq k/2\}|.
    \end{equation}
    Although \eqref{eq: circulant-i} is a crude estimate,
    it is enough along with \eqref{eq: circulant-gamma}
    to bound $\mu$ for all but a few cases.
	To this end, define $f: \mathbb{N} \to \mathbb{N}$ by
    \begin{equation*}
    	f(n_i) = \begin{cases}
        			\min\{k, n_i - 1\} + 1 & {\mathrm{ if \ }}\; n_i \leq k/2 \\
                    \min\{k, n_i - 1\}	& {\mathrm{ if \ }}\;  n_i \ge  k/2+1.
               \end{cases}
    \end{equation*}
	Now \eqref{mu_bd}, \eqref{eq: circulant-gamma}, and \eqref{eq: circulant-i} yield
	\begin{equation} \label{n_i_contribution}
		k-1 \ge \mu(G) \ge \sum_{i=1}^s f(n_i) -s+1.
	\end{equation}
	Observe that $f(n_i) \ge 1$ for all $i$, with equality if and only if $n_i = 1$.
	We claim that $n_i \leq k$  for all $i$.
    For if $n_i > k$ for some $i$, then $f(n_i) = k$,
    and so since $f(n_j) \geq 1$ for all other $j$,
    \eqref{n_i_contribution} implies that 
    \begin{equation*}
    	k-1 \ge k + (s-1) - s + 1 = k,
    \end{equation*}
    a contradiction.
    So $n_i \leq k$ for all $i$, implying that $|\Gamma_G(S)| = n-s$.
   
   We similarly claim that $n_i \leq k/2$ for all but at most one $i$, yielding $i_G(S)\ge s-1$.
   If instead $n_i \ge k/2+1$ for at least two different values of $i$,
   then again \eqref{n_i_contribution} implies
   \begin{equation} \label{eq: circulant-half-k}
   	k-1 \ge \sum_{i=1}^s f(n_i) - s + 1 \ge 2(k/2+1) + (s-2) - s + 1 = k+1,
   \end{equation}
   a contradiction.

	Since $i_G(S) \geq s-1$,  \eqref{mu_bd} yields
	\begin{equation*}
	k-1 \geq i_G(S) + |\Gamma_G(S)| - s + 1 \geq (s-1) + (n-s) - s + 1 = n - s.
	\end{equation*}
    Equivalently, $n \leq k - 1 + s \leq 2s$ by Lemma \ref{lem: S-is-big}, however, as $n$ is odd, necessarily $n\le 2s-1$. As $s\le (n+1)/2$, we have $n=2s-1$.

If $k=4$, then $k-1\ge \mu(G)\ge i_G(s)\ge s-1$, yielding $s\le 4$ and thus $n\le 7$, contradicting our assumption that $n\ge 9$. So we may suppose $k\ge 6$.

Now it is time to take edges inside $S$ of form $v_iv_{i+2}$ into account. As $\sum_{i=1}^s (n_i+n_{i+1})=2n=4s-2$, we have at least two different indices $i\ne j$ such that $n_i+n_{i+1}\le 3$ and  $n_j+n_{j+1}\le 3$. Consequently $v_iv_{i+2}$ and $v_jv_{j+2}$ are also edges of $G$, so $i_G(S)\ge s+1$. Now Lemma \ref{lem: S-is-big} and \eqref{mu_bd} yield
\begin{equation*}
s\ge k-1\ge (s+1)+(n-s)-s+1=n-s+2=s+1,
\end{equation*}
a contradiction again.
We have now eliminated all cases; $\mu(G)<k$ is impossible.
\end{proof}

\begin{lemma}\label{lem: 4-7-upper}
If $G$ is a $4$-regular graph on $7$ vertices, then $\mu(G)\le 3$.
\end{lemma}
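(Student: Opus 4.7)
The plan is to apply Corollary \ref{cor: mu-deleted-edges}: it suffices to exhibit disjoint $S, T \subseteq V(G)$ with $|S| > |T|$ and $|E_G(S; V\setminus T)| \le 3$. I aim for the convenient choice $|S| = 4$ and $T = V \setminus S$, so that $V - S - T = \emptyset$ and the quantity to be bounded collapses to
\[
|E_G(S; V\setminus T)| = i_G(S) = \binom{4}{2} - i_{\overline{G}}(S) = 6 - i_{\overline{G}}(S).
\]

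The first step is a classification: the complement $\overline{G}$ is $(7-1-4) = 2$-regular on $7$ vertices, hence a disjoint union of cycles with total length $7$. The only partitions of $7$ into parts of size at least $3$ are $7$ and $3+4$, so either $\overline{G} = C_7$ or $\overline{G} = C_3 \cup C_4$.

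Next, in each case I produce a $4$-set $S \subseteq V(G)$ with $i_{\overline{G}}(S) \ge 3$, equivalently $i_G(S) \le 3$. If $\overline{G} = C_7$, any four consecutive vertices along the cycle contain three consecutive $\overline{G}$-edges, giving $i_{\overline{G}}(S) = 3$. If $\overline{G} = C_3 \cup C_4$, the four vertices of the $C_4$ component span all four of its edges, giving $i_{\overline{G}}(S) = 4$ (so this case actually yields $\mu(G) \le 2$).

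Setting $T = V\setminus S$ then gives $|T| = 3 < 4 = |S|$ and $|E_G(S; V\setminus T)| \le 3$, so $\mu(G) \le 3$ by Corollary \ref{cor: mu-deleted-edges}. There is no real obstacle once the structural observation that $\overline{G}$ is $2$-regular is made; the rest is a direct case check against the two possible complements.
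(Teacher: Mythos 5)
Your proof is correct and follows the same route as the paper: pass to the $2$-regular complement, classify it as $C_7$ or $C_3\cup C_4$, and exhibit a $4$-set $S$ spanning at least $3$ complement edges so that $i_G(S)\le 3$. You merely spell out the case check and the choice $T=V\setminus S$ that the paper leaves implicit.
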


\begin{proof}
The complement of $G$ is $2$-regular, so it is either a seven-cycle, or the union of a triangle and a $4$-cycle.
In both cases it is easy to find a set $S$ with $|S|=4$, connected by at least $3$ edges of the complement, so $i_G(S)\le 3$. 
\end{proof}

\begin{lemma} \label{lem: C(4,7)}
    $\mu(C(4,7)) = 3$.
\end{lemma}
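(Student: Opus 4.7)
The plan is to combine Lemma \ref{lem: 4-7-upper} with a matching lower bound. Lemma \ref{lem: 4-7-upper} immediately gives $\mu(C(4,7)) \le 3$, so it suffices to rule out $\mu(C(4,7)) \le 2$.

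Suppose for contradiction that $\mu(C(4,7)) \le 2$. Since $\delta = 4$, we have $\mu \le \delta-1$, so Lemma \ref{lem: S-is-big} applies. By Lemma \ref{lem: choose-S-smaller}, we may pick a realizing pair $(S,T)$ with $|S| \le (n+1)/2 = 4$ and $|T| = |S|-1$. Lemma \ref{lem: S-is-big} gives $|S| \ge \delta - 1 = 3$, and forbids $|S| = 3$ (which would force $\mu = 3$). Hence $|S| = 4$ and $|T| = 3$, so $S \cup T = V$ and therefore
\[
    2 \ge \mu(C(4,7)) = |E_G(S; V \setminus T)| = i_G(S).
\]

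Now I would exploit the explicit structure of $C(4,7)$: since each vertex has degree $4$ in a graph on $7$ vertices, the complement $\overline{C(4,7)}$ is $2$-regular, and inspection shows its edges are exactly the distance-$3$ pairs; as $\gcd(3,7) = 1$, these pairs form a single $7$-cycle. Translating $i_G(S) \le 2$ into the complement, $S$ must induce at least $\binom{4}{2} - 2 = 4$ edges in this $7$-cycle. But the $3$ vertices of $V \setminus S$ split the cycle into at most three arcs covering $S$, and the induced subgraph on $4$ vertices in these arcs has $4 - (\text{number of arcs}) \le 3$ edges. This contradicts $i_{\overline{G}}(S) \ge 4$, so $\mu(C(4,7)) \ge 3$.

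The main (minor) obstacle is just identifying the complement as a $7$-cycle and doing the elementary counting bound on induced edges after removing three vertices; everything else follows mechanically from the preceding lemmas.
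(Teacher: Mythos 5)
Your proof is correct and follows essentially the same route as the paper: reduce via Lemma \ref{lem: choose-S-smaller} and Lemma \ref{lem: S-is-big} to the case $|S|=4$, $S\cup T=V$, and then bound $i_{C(4,7)}(S)$ from below. The only difference is that you spell out the step the paper dismisses as ``easy to see,'' by observing that the complement is a $7$-cycle and that deleting three vertices from it leaves at most three induced edges; this is a valid and welcome elaboration.
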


\begin{proof}
 By Lemma \ref{lem: choose-S-smaller}, there exists $(S, T)$ which realizes $\mu(C(4,7))$ with $s=|S|\le (n+1)/2=4$ and $t=|T|=s-1.$  By Lemma \ref{lem: S-is-big}, $s\ge k-1=3$, and if $s=3$, then $\mu(C(4,7))=3$.
It is easy to see that if $s=4$, then $i_{C(4,7)}(S)\ge 3$.
\end{proof}

\begin{theorem} \label{thm: regular-upper-mu}
	For all $n > k$ such that $\regseq$ is graphical,
    $\umu(\regseq) = k$ unless $k=2$ and $n$ is odd,
    or $k = 4$ and $n = 5$ or $n=7$.
    In these exceptional cases $\umu(\regseq) = k-1$.
\end{theorem}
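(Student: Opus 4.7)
The plan is to dispose of the easy upper bound first and then split on the parity of $n$. By Lemma~\ref{lem: mu-less-delta}, $\umu(\regseq) \le k$ always. When $n$ is even, Lemma~\ref{lem:umu_n_even} already exhibits a realization (a union of $k$ disjoint perfect matchings of $K_n$) with $\mu = k$, so $\umu(\regseq) = k$ and no exceptional cases arise. Since $\regseq$ being graphical forces $kn$ even, for $n$ odd I may assume $k$ is even; the remaining work is to match the upper bound in the generic case and to rule it out in each exceptional case.

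The generic odd-$n$ subcase is $n \ge 9$ and $k \ge 4$, where Theorem~\ref{thm: odd-circulant-mu} furnishes the circulant $C(k,n)$ with $\mu(C(k,n)) = k$, meeting the upper bound. The leftover parameters form a finite list: the entire $k = 2$ family together with the small triples $(n,k) \in \{(5,4),(7,4),(7,6)\}$.

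For $k = 2$ and $n$ odd, every realization $G$ is a disjoint union of cycles whose sizes sum to an odd number, so at least one component is an odd cycle $C$. Any perfect $2$-matching of $G$ must use each odd cycle of $G$ in its entirety and each even cycle as a perfect matching of that cycle (paths are not allowed as components of a perfect $2$-matching, and within a cycle of $G$ the only alternative to the whole cycle is a perfect matching, which is impossible on an odd number of vertices). Deleting a single edge of $C$ leaves an odd path in its place, which admits neither structure, so all perfect $2$-matchings are killed; hence $\mu(G) = 1$ for every realization, giving $\umu(\mathbf{2^n}) = 1 = k-1$.

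For the three small triples: $(5,4)$ forces $G = K_5$, and Lemma~\ref{lem: 2k-3} with $n = 2\delta(G) - 3 = 5$ gives $\mu(K_5) = 3 = k-1$; for $(7,4)$, Lemma~\ref{lem: 4-7-upper} supplies the universal upper bound $\mu \le 3$ while Lemma~\ref{lem: C(4,7)} exhibits $C(4,7)$ achieving $\mu = 3$, so $\umu(\mathbf{4^7}) = 3 = k-1$; finally for $(7,6)$ the sole realization is $K_7$, and $n = 7 < 2k-3 = 9$ together with Lemma~\ref{lem: 2k-3} yields $\mu(K_7) = 6 = k$, so this triple is \emph{not} exceptional and matches the generic conclusion. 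All the substantive difficulty has already been absorbed into Theorem~\ref{thm: odd-circulant-mu}; the remaining argument is essentially bookkeeping, and the principal pitfall is overlooking one of the sporadic small cases or misaligning it with the $n \ge 9$ threshold of the circulant theorem.
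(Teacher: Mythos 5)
Your overall structure is the same as the paper's: the trivial upper bound $\umu(\regseq)\le k$, Lemma~\ref{lem:umu_n_even} for even $n$, Theorem~\ref{thm: odd-circulant-mu} for odd $n\ge 9$ and even $4\le k<n$, and then a finite list of leftover parameters. Your leftover list is complete, and your treatments of the $k=2$ family, of $(n,k)=(7,4)$ via Lemmas~\ref{lem: 4-7-upper} and~\ref{lem: C(4,7)}, and of $(n,k)=(7,6)$ (which you correctly make explicit as non-exceptional via Lemma~\ref{lem: 2k-3}, where the paper leaves it implicit in ``all other cases'') all match the paper's argument.

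There is one genuine gap, in the case $(n,k)=(5,4)$. You assert that Lemma~\ref{lem: 2k-3} with $n=2\delta(G)-3=5$ ``gives $\mu(K_5)=3$,'' but the equality conclusion of that lemma is conditional on the hypothesis $\mu(G)<\delta(G)$: it says that \emph{if} $\mu(K_5)<4$, \emph{then} $\mu(K_5)=3$. As cited, the lemma only supplies the lower bound $\mu(K_5)\ge 3$; it cannot rule out $\mu(K_5)=4$, and ruling that out is precisely the claim that $(5,4)$ is an exceptional case --- so your justification is circular at exactly the point that matters. You need a separate upper-bound construction: delete the three edges of a triangle of $K_5$, leaving its three vertices independent with only two neighbors, so by Theorem~\ref{thm: mu-p2m} (or Corollary~\ref{cor: mu-deleted-edges}) $\mu(K_5)\le 3$. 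This is how the paper closes the case; with that one sentence added, your proof is complete.
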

\begin{proof}

If $n$ is even, then use Lemma \ref{lem:umu_n_even}. From now on we assume that $n$ is odd.
If $G$ is a 2-regular graph on an odd number of vertices, then $G$ must contain an odd cycle as a component.
    Any odd cycle has $\mu = 1$, for deleting one edge leaves an odd path,
    which contains no cycles and does not have a perfect matching.
    Since $G$ is a union of cycles, $\mu(G) = 1$.
    Hence for $n$ odd, $\umu(\mathbf{2^n}) = 1$.
    
    When $k =4$ and $n = 5$, then $K_5$ is the unique realization of $\regseq$. 
    If we delete the edges of any triangle from $K_5$,
    the vertices contained in that triangle become independent with two neighbors.
    This means that $\mu(K_5) \leq 3$. But we know from Lemma \ref{lem: 2k-3} that $\mu(K_5) \geq 3$, 
    thus $\mu(K_5) =3$ and $\umu(\mathbf{4^5}) = 3$.
    
    When $k = 4$ and $n = 7$, Lemmas \ref{lem: C(4,7)} and \ref{lem: 4-7-upper} together show that $\umu(\mathbf{7^4}) = 3$.
    
    Finally, Lemma \ref{lem: 2k-3} and Theorem \ref{thm: odd-circulant-mu} show that $\mu(\regseq) = k$ in all other cases.
\end{proof}

\subsection*{Acknowledgments}

The authors are grateful to Budapest Semesters in Mathematics (BSM) for organizing research courses and also to Dezs\H o Mikl\'os for suggesting this topic.
Thanks also to the spring 2016 BSM research group which established and shared some preliminary results suggesting the direction for the current work.

\bibliographystyle{plain}
\bibliography{references}

\end{document}